\newlength{\continueindent}
\colorlet{MyBlue}{DodgerBlue!75!Black}
\colorlet{MyGreen}{DarkGreen!85!Black}
\pgfplotsset{compat=1.16}
\newcolumntype{C}[1]{>{\centering\let\newline\\\arraybackslash\hspace{0pt}}m{#1}}
  \def\cref#1{<#1>}%
\crefname{assumption}{Assumption}{Assumptions}
\crefname{assumptionloc}{Assumption}{Assumptions}
\newtheorem{thm}{Theorem}
\newtheorem{prop}[thm]{Proposition}
\newtheorem{lem}[thm]{Lemma}
\newtheorem{cor}[thm]{Corollary}		% for corollaries (unnumbered)
\newtheorem{example}{Example}[section]
\newtheorem{remark}{Remark}
\newenvironment{acknowledgements}%       New acknowledgement environment
    {\large\bfseries Acknowledgement%
    \par\medskip\normalfont\normalsize}%
    {}%
\newtheorem{assumption}{Assumption}		% for assumptions
\newtheorem*{definition*}{Definition}		% for definitions (unnumbered)
\newtheorem*{assumption*}{Assumptions}		% for assumptions (unnumbered)
\newcommand{\fwtag}[1]{% \asmtag{<tag>}
  \let\oldthefw\thefw% Store \thefw
  \renewcommand{\thefw}{#1}% Redefine it to a fixed value
  \g@addto@macro\endfw{% At \end{fw}, ...
    \addtocounter{fw}{-1}% ...restore assumption counter value and...
    \global\let\thefw\oldthefw}% ...restore \theassumption
  }
\newcommand{\RR}{\mathbb{R}}
\definecolor{dkgreen}{rgb}{0,0.6,0}
\definecolor{gray}{rgb}{0.5,0.5,0.5}
\definecolor{mauve}{rgb}{0.58,0,0.82}
\tiny\color{gray},
\definecolor{deepblue}{rgb}{0,0,0.5}
\definecolor{deepred}{rgb}{0.6,0,0}
\definecolor{deepgreen}{rgb}{0,0.5,0}
\newcommand\pythonstyle{\lstset{
language=Python,
basicstyle=\ttm,
otherkeywords={self},             % Add keywords here
keywordstyle=\ttb\color{deepblue},
emph={MyClass,__init__},          % Custom highlighting
emphstyle=\ttb\color{deepred},    % Custom highlighting style
stringstyle=\color{deepgreen},
frame=tb,                         % Any extra options here
showstringspaces=false            %
}}
\DeclareMathOperator*{\argmin}{arg\,min}
\DeclareMathOperator*{\conv}{conv}
\DeclareMathOperator*{\jaco}{\nabla}
\newcommand{\x}{w}
\newcommand{\X}{u}
\newcommand{\D}{D}
\newcommand{\gnui}{{g_{\nu}}}
\newcommand{\gnu}{g_{\nu}} %same
\newcommand{\qnu}{q_{\nu}} %same
\newcommand{\barq}{\bar{q}}
\newcommand{\mnui}{m_{\nu}}
\newcommand{\etaa}{\underline{\eta}}
\newcommand{\etab}{\bar{\eta}}
\newcommand{\etas}{{\eta^{\star}}}
\newcommand{\Qp}{Q_p}
\newcommand{\Sp}{\mathbb{S}_p}
\newcommand{\Spnu}{\mathbb{S}^\nu_p}
\newcommand{\Loss}{L}
\newcommand{\fnu}{f_{\nu}}
\newcommand{\Rp}{S^p} %{R^p}
\newcommand{\Rn}{R_n}
\newcommand{\Rpn}{S^p_n}%{R^p_n}
\newcommand{\bRpn}{\bar S^p_n}%{\bar R^p_n}
\newcommand{\Pn}{P_n}
\DeclareMathOperator{\bigO}{O}
\DeclareMathOperator{\smallo}{o}
\newcommand{\vertiii}[1]{{\left\vert\kern-0.25ex\left\vert\kern-0.25ex\left\vert #1
    \right\vert\kern-0.25ex\right\vert\kern-0.25ex\right\vert}}
\newcommand{\trans}[1]{#1^\top}
\newcommand\pythoninline[1]{{\pythonstyle\lstinline!#1!}}
\DeclareMathOperator*{\R}{\mathbb{R}}
\DeclareMathOperator*{\PP}{\mathbb{P}}
\newcommand{\Rd}{\mathbb{R}^d}
\renewcommand{\epsilon}{\varepsilon}
\newcommand{\dd}{\mathrm{d}}
\newcommand{\covnum}{N}
\newcommand \dist {\operatorname{dist}} %distance
\newcommand \T {^{\top}}	%transpose
\DeclarePairedDelimiterX{\inp}[2]{\langle}{\rangle}{#1, #2} % requires MathTools
 \newcommand{\w}{w}
\newcommand{\nfl}{m}
\newcommand{\nfair}{G}
\newcommand{\eg}{e.g.,\xspace}		% for consistency
\newcommand{\ie}{i.e.,\xspace}		% for consistency
\title{Superquantiles at Work: Machine Learning Applications and Efficient Subgradient Computation}
\author{
Yassine Laguel$^{1}$, Krishna Pillutla$^{2}$, Jérôme Malick$^{1}$, Zaid Harchaoui$^{2}$ \\
% authors
% Yassine Laguel$^{1}$ \hspace{2em}
% J\'{e}r\^{o}me Malick$^{1}$  \hspace{2em}
% Wim Van Ackooij$^{2}$ \hspace{2em}\\
% affiliations
\small{$^{1}$Univ. Grenoble Alpes, CNRS, Grenoble INP, LJK, 38000 Grenoble, France}  \\
\small{$^{2}$University of Washington, Seattle, WA, USA}}
\begin{document}
\maketitle

% TODO: change to \title for non-ICML
%%%%%%%%%%%%%%%%%%%%%%%% START-ICML
% \twocolumn[
% \icmltitle{Device Heterogeneity in Federated Learning: A Superquantile Approach}

% % TODO: author info
% \begin{icmlauthorlist}
% \icmlauthor{Aeiau Zzzz}{to}
% \end{icmlauthorlist}
% \icmlaffiliation{to}{Department of Computation, University of Torontoland, Torontoland, Canada}
% \icmlcorrespondingauthor{Cieua Vvvvv}{c.vvvvv@googol.com}
% \icmlkeywords{Federated Learning, risk-senstive}
% \vskip 0.3in
% ]
% %\printAffiliationsAndNotice{}
%%%%%%%%%%%%%%%%% End - ICML

\begin{abstract}
R.\;Tyrell Rockafellar and  collaborators introduced, in a series of works, new regression modeling methods based on the notion of superquantile (or conditional value-at-risk). These
%generalized linear regression
methods
have been influential in economics, finance, management science, and operations research in general. Recently, they have been the subject of a renewed interest in machine learning, to address issues of distributional robustness and fair allocation. In this paper, we review some of these new applications of the superquantile, with references to recent developments. These applications involve nonsmooth superquantile-based objective functions that admit explicit subgradient calculations. To make these superquantile-based functions amenable to the gradient-based algorithms popular in machine learning, we show how to smooth them by infimal convolution and describe numerical procedures to compute the gradients of the smooth approximations.
We put the approach into perspective by comparing it to other smoothing techniques and by illustrating it on toy examples.
\end{abstract}

% \keywords{
% }

% ===============================================================
% ===============================================================
\section{Introduction}
% ===============================================================
% ===============================================================

% ===============================================================
\subsection{Superquantiles at Work: Old and New}
% ===============================================================

Risk measures play a crucial role in optimization under uncertainty, involving problems with an aversion to worst-cases scenarios. Among popular convex risk measures, the superquantile -- also called the Conditional Value at Risk, Tail Value at Risk, Mean Excess Loss, or Mean Shortfall -- has received special attention. The superquantile has been extensively studied from a convex analysis perspective: we refer, for instance, to \cite{rockafellar2000optimization} for a variational formulation of the superquantile, to \cite{zbMATH05213441} for its generalization to a larger class of risk measures, to \cite{DBLP:journals/fs/FollmerS02} for a dual formulation (also later generalized in \cite{ruszczynski2006optimization} or \cite{rockafellar2002conditional}) and \cite{rockafellar2014random} for additional convex properties. The superquantile can be traced back to the paper~\cite{zbMATH04016602}.
These nice theoretical properties have given interesting results in various applications, ranging from finance \cite{sarykalin2008value} to energy planning \cite{DBLP:journals/mp/GuiguesS13}; for a thorough discussion and many references, we refer to the seminal work\;\cite{rockafellar2000optimization}, the classical textbook \cite[Chap.\;6]{Shapiro:2014:LSP:2678054}, or the tutorial paper\;\cite{rockafellar2013superquantiles}.

More recently, the superquantile has also drawn an increasing attention in machine learning. In this paper, we give an overview of some of the new applications of the superquantile in machine learning: we discuss the use of the superquantile for distributionally robust learning, fairness in machine learning, federated learning, adversarial classification, and risk-sensitive reinforcement learning; we also give toy illustrations and pointers to recent exciting developments.

Superquantile optimization problems are nonsmooth, possibly non-convex, but also highly structured.
In financial or operations research applications,
these nonsmooth optimization problems are usually solved using one of two approaches: (a) extending specific algorithms (\eg progressive hedging for risk-averse multi-stage programming\;\cite{rockafellar2018solving}), or, (b) relying on convex programming (\eg linear programming coupled with Monte Carlo simulations for portfolio management\;\cite{rockafellar2000optimization}). We refer to\;\cite{rockafellar2014superquantile} and\;\cite{miranda2014superquantile} for discussions on computational approaches. In machine learning, recent papers propose to use
stochastic first-order optimization algorithms for superquantile learning; see \eg\cite{DBLP:conf/nips/CuriLJ020,levy2020large} and references therein.

In this paper, we propose a simple alternative.
We study the smoothing of superquantile by infimal-convolution, extending and clarifying the results of\;\cite[Sec.~3]{laguel2020first}. This opens the way for using first-order methods for smooth optimization: this is of special interest for machine learning applications where standard algorithms and software rely heavily rely on gradient-based optimization~\cite{DBLP:conf/osdi/AbadiBCCDDDGIIK16,paszke2017automatic}.
In fact, optimization guarantees in this context are typically given for smooth surrogates of the superquantile, e.g., \cite{laguel2020device,levy2020large}; which we study and clarify.
In view of these applications, we pay attention to provide
efficient procedures for computing gradients of smooth approximations of superquantile-based functions.
%Though a complete numerical study is out of scope,
We illustrate these smoothed first-order oracles combined  with quasi-Newton methods on simple problems with synthetic or real data.
We refer to our recent work\;\cite{laguel2020first,laguel2020device} for more computational experiments, using particular cases of such efficient smoothed oracles.

%========================================================================================================================================
%\subsection{\change{Contributions and outline}}\label{sec:setting}
%========================================================================================================================================

More specifically, the contributions of this paper %on this topic
are multiple and can be pointed out, %summarized
section by section, as follows:
\begin{itemize}
    \item We formalize, in Section\;\ref{sec:learning},  the existing notion of empirical superquantile minimization and provide a convergence result for supervised learning.
    \smallskip
    \item We propose, in Section\;\ref{sec:newapplis}, an overview of recent machine learning applications of superquantiles.
    \smallskip
    % \item We provide, in Section\;\ref{sec:sub} a thorough study of the generalized subgradients of superquantile-based functions as well as the smooth approximation by inf-convolution (the efficient gradient computation in Section\;\ref{sec:smooth},
    % and in Section\;\ref{sec:smooth-comp} the connection inner/outer smoothings, and the connection with popular smoothings by convolution).
    \item We study in Section\;\ref{sec:sub} the
    (sub)gradient calculus of superquantile-based functions with
    a focus on computational efficiency.
    In particular, Section\;\ref{sec:sub2} studies generalized subgradients of superquantile-based functions and
    Section\;\ref{sec:smooth} considers gradients of %contains the
    smooth approximations of the superquantile by inf-convolution.
    %and shows how to efficiently compute its gradient.
    Finally, we establish in Section\;\ref{sec:smooth-comp} the
    equivalence between different inf-convolution schemes, as well as the smoothing by convolution. We propose to use quasi-Newton algorithms to minimize these smoothed approximations.
\end{itemize}

%========================================================================================================================================
\subsection{Superquantiles: Review and Notation}\label{sec:setting}
%========================================================================================================================================

%\cite{artzner1999coherent,rockafellar2000optimization,zbMATH05213441}.

We recall basic definitions and properties used in this paper.
Our notation and terminology follow closely the ones of \cite{ruszczynski2006optimization} and \cite{rockafellar2013superquantiles}; we refer to these papers for more details and references.
% We consider a probability space and a real random variable $U$ that admits a second order moment.
% We consider a probability space $(\Omega, \mathcal{A}, \mathbb{P})$ and the set $\mathcal{L}^2 = \mathcal{L}^2(\Omega, \mathcal{A}, \mathbb{P})$ of real random variables on $\Omega$ that admit a second order moment.

Consider a probability space $\Omega$, with probability denoted $\PP$.
For $p \in (0,1)$, the $p$-quantile of a random variable $U\colon\Omega\rightarrow\RR$, denoted by $Q_p(U)$,
is the inverse of the cumulative distribution function of $U$. For all $t\in \RR$ we have %the equivalence
\begin{equation}\label{eq:Qp}
\Qp(U) \leq t \iff \PP(U \leq t) \geq p\,.
\end{equation}
%Thus $\Qp(U)$ measures violations of $U \leq t$ to the worst $(1-p)\%$ of outcomes of $U$.
When \eg $p=1/2$, the $p$-quantile corresponds the median value of the random variable.
For $p \in [0,1)$, the $p$-superquantile of $U$ is defined as the mean of values of quantiles greater than the threshold $p$:
\begin{equation}\label{eq:def_cvar}
\Sp(U) = \frac{1}{1-p} \int_{p}^1 Q_{p^\prime}(U) \mathrm{d}p^\prime\,.
\end{equation}
% a statistical summary of a distribution tail~\cite{lee2018minimax,duchi2019variance,kuhn2019wasserstein}.
% The knowledge of any of these three quantities $\Qp(U), F_U, \Sp(U)$ enables the recovering of the others \cite{rockafellar2014random}.
The analogue to \eqref{eq:Qp} for the superquantile is stronger:
%as the desired upper bound on worst outcomes of $U$ must hold as an expectation
\[
\Sp(U) \leq  t \iff \text{$U$ is lower than $t$ on average in its $p$-tail.}
\]
The superquantile is thus interpreted as a measure of the upper tail of the distribution of $U$. Another interpretation comes from the
%Superquantile has also the following
dual formulation of superquantiles\;\cite{DBLP:journals/fs/FollmerS02}:  $\Sp(U)$ can be written as a maximal expectation of $U$ with respect to probability measures having a (Radon-Nykodim) derivative bounded by $1-p$:
\begin{equation}\label{eq:def_max_cvar}
\Sp(U) = \max_{{\substack{0\leq q(\cdot) \leq \frac{1}{1-p}\\ \int_\Omega q\, d\,\mathbb{P}(\omega) = 1}}} \int_{\omega \in \Omega} U(\omega) q(\omega) \mathrm{d} \mathbb{P}(\omega) \,.
\end{equation}
When $U$ is a discrete random variable, the above expression simplifies; we come back to this in Section~\ref{sec:sub}. Finally, for an optimization perspective, the superquantile also has a nice variational formulation\;\cite{rockafellar2000optimization}:
\begin{equation}\label{eq:def_min_cvar}
\Sp(U) = \min_{\eta \in \mathbb{R}} \left\{\eta + \frac{1}{1-p} \mathbb{E}[\max(U-\eta,0)]\right\}.
\end{equation}
%\item conditional expectation formulation see section 2.
In this expression, the quantile $\Qp(U)$ is obtained as the left end-point of the solution.
%; we will use this property in Section~\ref{sec:num}.
This last expression also reveals an important advantage of $\Sp(U)$ over $\Qp(U)$ as a measure of the tail of $U$, from both theoretical and practical points of view: the superquantile is convex, positively homogeneous, monotonic, translation invariant; see, \eg, the tutorial article\;\cite{rockafellar2013superquantiles}.

\section{Standard and Superquantile Machine Learning}\label{sec:learning}
% ===============================================================
% ===============================================================

Optimization is at the heart of machine learning, through the paradigm of empirical risk minimization, which we briefly recall in Section\;\ref{sec:classical}.
In Section\;\ref{sec:safer},
we discuss superquantile learning,
where the risk measure of the learning model is the superquantile.
The material of this section also serves as a gentle introduction to the recent developments outlined in the next section.

% ===============================================================
\subsection{Supervised Learning Review}\label{sec:classical}
% ===============================================================

%Supervised learning goes in two stages: training and testing.
We recall here the notation and basic notions of supervised learning; we refer to standard textbooks\;\cite{zbMATH05133436} or \cite{shalev2014understanding} for more details.
In the training phase of supervised learning, we have access to $n$ data-points: each data-point is a pair $(x, y)$, where $x\in X$ is a feature vector and $y\in Y$ is its corresponding target.  For instance, for a binary classification task, $y$ is a Boolean encoding the membership of the image $x$ to one of the two classes. From this training data, the aim is to learn a parameter $\x\in W \subset \Rd$ as ``weights'' of a given prediction function $z = \varphi(\x,x)$ that produces, for an input $x\in X$, a prediction $z \in Z$ of the associated target $y\in Y$.
Typical examples of prediction functions %$\varphi: \Rd \times X \rightarrow Z$
include simple linear models $\varphi(\x,x)= \trans{\x}x$, polynomial models (as in Example\;\ref{ex:learning} below), or artificial neural networks
\begin{equation}\label{eq:NN}
\varphi(\x,x)= \trans{\x_s}\sigma(\cdots\sigma(\trans{\x_1}x))   \,,
\end{equation}
which are successive compositions of linear models $\x_j$ and non-linear activations $\sigma$.
The prediction error is then measured by a loss function $\ell: Y \times Z \rightarrow \R$. Typical examples of loss functions  include the least-squares loss ($Y\!=\R, Z\!=\R$) or the logistic loss ($Y=\{-1, 1\}, Z=\R$), defined respectively as
\begin{equation}\label{eq:losses}
\ell(y, z) = \frac{1}{2}(y-z)^2\,, \qquad\text{and,}\qquad
\ell(y, z) = \log(1+\exp(-y\,z)) \,.
\end{equation}

Assuming\footnote{When the assumption of the existence of an underlying distribution $P$ is not realistic, the usual approach is to still use the empirical risk minimization~\eqref{eq:ERM} from the given training dataset
$\Pn= \{(x_i,y_i)\}_{1 \leq i \leq n}$.} that the training data are generated from a given distribution $P$ over $X \times Y$, the ``best" model parameter $\x$ solves
the optimization problem
\begin{equation}\label{eq:pop-risk}
    \min_{\x \in W} \left[
        R(w) = ~\mathbb{E}_{(x,y)\sim P}
        \left[\ell(y, \varphi(\x,x)\right]
    \right]\,.
\end{equation}
However, we can only access $P$ via i.i.d. samples
$\{(x_i,y_i)\}_{1 \leq i \leq n}$. So we consider instead the empirical risk minimization approach,
which solves the following optimization problem, analogous to
\eqref{eq:pop-risk} but where the expectation is taken over $\Pn$, the empirical measure over the training examples:
\begin{equation}\label{eq:ERM}
\min_{\x \in W}
\left[
\Rn(w) =~\mathbb{E}_{(x,y)\sim\Pn}\left[\ell(y, \varphi(\x,x )\right]
= \frac{1}{n} \sum^n_{i=1}\ell(y_i, \varphi(\x,x_i )
\right] \,.
\end{equation}
Under suitable conditions, we have that the minimizer $\x_n^\star$ of \eqref{eq:ERM}
converges almost surely in mean error to the best population error as $n \to \infty$, i.e.,
\begin{equation}\label{eq:convergence}
R(w_n^\star)\underset{n \to \infty}{\longrightarrow} R(w^\star) \qquad\text{almost surely}.
\end{equation}

For concreteness, we instantiate this general framework with a simple regression task, which will also be used in illustrations in subsequent sections.

\begin{example}[Least-squares regression]\label{ex:learning}
Consider a dataset $\D=(x_i, y_i)_{1\leq i \leq n}\in (\RR\times\RR)^n$ generated by noisy observations of a quadratic function: we have
\begin{equation}\label{eq:data}
y_i = \bar\x_{0} + \bar\x_{1}\; x_i + \bar\x_{2}\; x_i^2 + \varepsilon_i \,,
\qquad\text{where }\varepsilon_i \sim \mathcal{N}(0,\sigma^2)\,,
\end{equation}
for an unknown vector $\bar\x = (\bar\x_0, \bar\x_1,\bar\x_2) \in \RR^3$ that we would like to approximate.
In this case, \eqref{eq:ERM} instantiates as the ordinary least-squares problem
\begin{equation}\label{eq:OLS}
    \min_{\x\in \mathbb{R}^3} \mathbb{E}_{(x, y) \sim \Pn}\big[(y - (\x_2 x^2 + \x_1 x + \x_0))^2\big] \,,
\end{equation}
with a quadratic model $\varphi(\x,\cdot)$ and the square loss.\qed
% We consider the squared residuals $r_i^2 = (y_i - w^\top x_i)^2$
% and the $p$-quantiles of the empirical distribution of $(r_i^2)_{i=1,\dots,n}$ for %$p=0.5$ and
% $p=0.9$.
\end{example}

% ===============================================================
\subsection{Superquantile Learning}\label{sec:safer}
% ===============================================================

The standard framework, recalled above, is currently challenged by important domain applications~\cite[\eg][]{recht2019imagenet,DBLP:journals/ftml/KairouzMABBBBCC21}, in which several of the standard assumptions turn out to be limiting. Indeed, classical supervised learning assumes that, at training time, the examples  $(x_1,y_1), \dots, (x_n, y_n)$ are drawn i.i.d. from a given distribution $P$, and that, at testing time, we face a new example $x'$, also drawn from the same distribution $P$. However, recent failures of learning systems when operating in unknown environments~\cite{metz2018microsoft,knight2018selfdriving} underscore the importance of taking into account that we may not face the same distribution at test/prediction time.

Distributionally robust learning
aims to bolster the safety of learning systems
by enforcing robustness to heterogeneous data.
This notion of robustness is aligned with the one in robust optimization~\cite{DBLP:books/degruyter/Ben-TalGN09}; it is, however, different from the notion of robustness in robust statistics~\cite[Sec. 12.6]{DBLP:books/degruyter/Ben-TalGN09}. Here, we assume that the dataset is preprocessed to remove outliers such that the extreme data in the dataset is relevant to the learning process.
%in the face of worst case
%in the face of unexpected distributions at prediction time.

The superquantile can be used to build distributionally robust machine learning models, as studied recently in~\cite{laguel2020first,levy2020large,DBLP:conf/nips/CuriLJ020,soma2020statistical} among others.
%less sensitive topossible train-test distribution shifts .
From the dual formulation\;\eqref{eq:def_max_cvar}, superquantiles are expected to produce models that perform better in case of changes in underlying distributions, compared to models trained using standard empirical risk minimization.
Therefore, a natural approach to distributionally
robust learning consists in replacing the
expectation in\;\eqref{eq:pop-risk} by the superquantile\;\eqref{eq:def_cvar}. The resulting objective function is
\begin{equation}\label{eq:pop-sq}
\min_{\x \in W}
\left[
\Rp(w)=
{[\Sp]}_{(x,y)\sim P}\big[\ell(y, \varphi(\x,x)\big]
\right]\,,
\end{equation}
% Since we only have sampling access to $P$, we consider the
% empirical version of this objective,
as well as its empirical version analogous to\;\eqref{eq:ERM}
\begin{equation}\label{eq:ESM}
\min_{\x \in W}
\left[
\Rpn(w)=
{[\Sp]}_{(x,y)\sim\Pn}\big[\ell(y, \varphi(\x,x )\big]
\right]\,.
\end{equation}
% where the superquantile is taken over
% the empirical measure $\Pn$
% over the training examples
% $\{(x_i,y_i)\}_{1 \leq i \leq n}$ drawn i.i.d. from $P$.
%Similarly to
As we establish in the forthcoming Theorem\;\ref{thm:uniform-convergence-param},
the convergence property\;\eqref{eq:convergence}
also holds for $\Rpn$ and $\Rp$. We refer to \cite{DBLP:journals/corr/abs-1810-08750,shafieezadeh2019regularization} for further discussions on statistical aspects of
distributionally robust learning,
and to
\cite{mhammedi2020pac,soma2020statistical,lee2020learning} for superquantile learning in particular.
% TODO: move to the generalization bound section?

In practice, superquantile has been shown experimentally to produce  models more robust to distribution shifts
in various contexts; we refer to
\cite{DBLP:conf/nips/FanLYH17,laguel2020first,levy2020large,DBLP:conf/nips/CuriLJ020,soma2020statistical,DBLP:conf/aistats/KawaguchiL20}.
% for more illustrations on synthetic and real data and
For illustration, we include numerical experiments inspired from the ones of\;\cite{DBLP:conf/nips/CuriLJ020} in the Appendix. We also include
a short toy example here.
% of such superquantile learning.}
% A natural approach to safe machine learning then consists in replacing the
% expectation in\;\eqref{eq:ERM} by the superquantile\;\eqref{eq:def_cvar} as
% \begin{equation}\label{eq:ESM}
% \min_{\x \in \Rd} {[\Sp]}_{(x_i,y_i)\sim\D}\big(\ell(y_i, \varphi(\x,x_i )\big) \,,
% \end{equation}
% where the superquantile is taken over the data distribution $\D$.
% Here we provide a basic illustration of such superquantile learning. We refer to \cite[Sec.\;4]{laguel2020first} for more illustrations on synthetic and real data and \cite{DBLP:journals/corr/abs-1810-08750,shafieezadeh2019regularization} and references therein for a discussion on statistical aspects of safe learning.

\begin{example}[Superquantile regression]\label{ex:safe_learning}
We illustrate the interest of superquantile learning in presence of heterogeneous data, on a variant of the regression task of Example\;\ref{ex:learning}.
Consider a dataset gathering two different subgroups: $80\%$ of the points are generated by \eqref{eq:data} and the remaining $20\%$ are also generated by \eqref{eq:data} but with completely different parameters $\bar \x$.
% where $80\%$ of the points is generated by \eqref{eq:data} with given $\bar\x_B$ and $\sigma_B$ (B for Blue), and the remaining $20\%$ are generated by \eqref{eq:data} with completely different $\bar\x_R$ and $\sigma_R$ (R for Red).
Then we can compare the usual approach using ordinary least-squares\;\eqref{eq:OLS} with its superquantile counterpart of the form\;\eqref{eq:ESM} for $p=0.9$.

Figure\;\ref{fig:safe_plot} shows the distribution of residuals $r_i = |y_i - (\x_2 x_i^2 + \x_1 x_i + \x_0)|$ for models\;\eqref{eq:OLS} and\;\eqref{eq:ESM}.
% c'est normal qu'on perde aussi en 80e car il y a 80% du dataset sur la population majoritaire. En voulant gagner sur les cas extreme, on va forcement perdre sur les 80% autres.
The superquantile model\;\eqref{eq:ESM} shows an improvement of 90/95th quantiles of the distribution of residuals, which appears on histograms as a shift of the upper tail to the left.
This comes at the price of a degraded performance on average, which appears on the figure as the shift of the peak
%of the lower tail
of residuals to the right.\qed
%and thus shows an improved performance on worst cases.
%lower performances on inputs well managed by the least-squares model.

% We plot on Figure\;\ref{fig:safe_plot} the bivalent dataset and the two regression curves given by the least squares model\;\eqref{eq:OLS} and the superquantile model\;\eqref{eq:ESM}. As expected, the least squares model tends to fit the predominant data and in contrast, the superquantile model better captures the bivalence of the data. This is quantified by the figures of table the superquantile model indeed shows an improved regression on extreme cases, and this comes with the price of lower performances on average.\qed
\end{example}

\begin{figure}[t!]
    \centering
    % \vspace{10pt}
    \includegraphics[height=4cm]{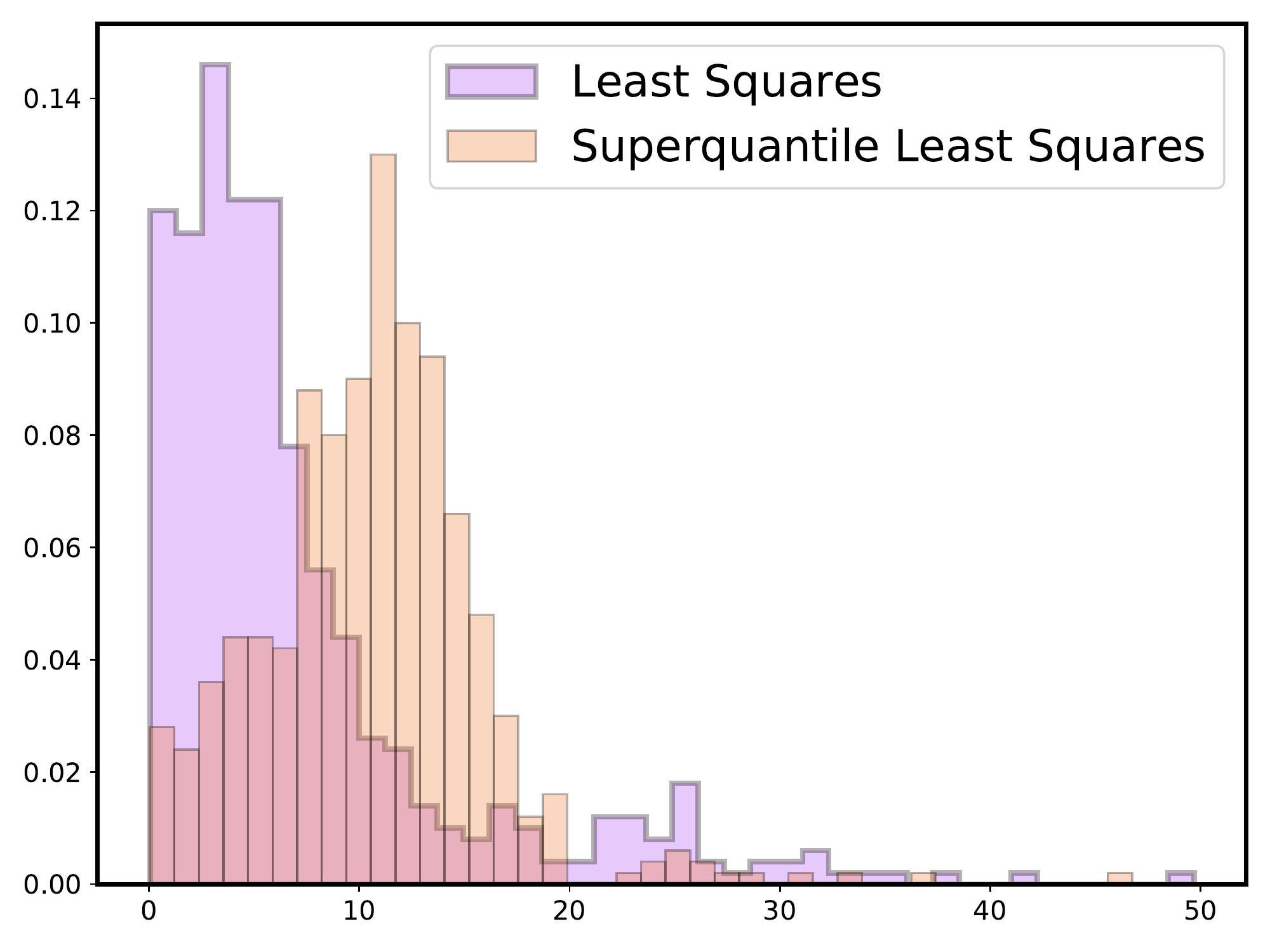}
    \hspace{2ex}
    \begin{tabular}[b]{lcc}
\toprule
Metric & model\;\eqref{eq:OLS} & model\;\eqref{eq:ESM} \\[2mm]
\midrule
Mean & $\mathbf{7.23}$ & $10.62$ \\[2mm]
%  &  $299.00$ &  $575.80$\\
% $\bar Q_p\;-\;p=0.2$ & $16.5$ & $1.04$ & $6.57$ &  $57.0$ \\
$80^\text{th}$ perc. & $\mathbf{9.73}$ & $13.94$ \\[2mm]
$90^\text{th}$ perc. & $17.29$ & $\mathbf{15.93}$ \\[2mm]
$95^\text{th}$ perc. & $24.00$ & $\mathbf{17.83}$ \\[2mm]
%SLS User level & $159.72$ & $255.56$ &  $342.80$ &  $468.66$\\
\bottomrule
\vspace{1.5ex}
\end{tabular}
    \captionlistentry[table]{A table beside a figure}
    %\captionsetup{labelformat=andtable}
    \caption{Superquantile regression improves the prediction on the worst-case datapoints. \textbf{Left figure}: histograms of residuals $r_i = |y_i - (\x_2 x_i^2 + \x_1 x_i + \x_0)|$ for model\;\eqref{eq:OLS} (in violet) and model\;\eqref{eq:ESM} (in orange).
    \textbf{Right table}: $x^\text{th}$ perc. stands for $x$\textsuperscript{th} percentile of final distribution of the residuals $r_i$.\label{fig:safe_plot}}
\end{figure}

% \textcolor{red}{\textbf{KP}: TODO list
% \begin{itemize}
%     \item The generalization bound requires $w \in W$,
%         which is basically a bounded set. However,
%         in the rest of the paper, it appears that we are
%         using unconstratined minimization (at least $W$ is not spoken of). Two ways around this:
%         \begin{itemize}
%         \item Instantiate $W$ is a
%             Euclidean ball with a very large norm for the optimization section
%         \item Use a regularized formulation rather than a
%             a constrained formulation
%         \end{itemize}
%         Either way, we need to make a mention of this in Section 3.
%     % \item I introduced a prediction space $Z$ in the first section. This is necessary to make the
%     % presentation of the generalization bound strictly correct.
%     % Examples: regression $Y = Z = \R$,
%     % binary classification $Y = \{-1, 1\}, Z = \R$,
%     % multiclass classification $Y = \{1, \cdots, K\}$
%     % or $Y = \{0, 1\}^K$ while $Z = \R^K$.
%     % \item Add $R, \Rn$ notation to the setup for the risk-sensitive case
%     % \item Change $D$ from dataset to $D_n$, the empirical measure over the data
%     % \item $D(t)$ in the smoothing part clashes with the
%     % distribution $D$ while $d(t)$ clashes with the dimensionality $d$
% \end{itemize}}

We finish this section %generalities
on superquantile learning with an asymptotic result generalizing~\eqref{eq:convergence} for the superquantile.
We present an elementary self-contained proof: we follow the general approach (see e.g.\;the monograph \cite{zbMATH05133436}) that we combine with the specific expression of the superquantile.%\;\eqref{eq:def_min_cvar}.

% that the minimizers of the empirical superquantile \eqref{eq:ESM}
% converge in the population objective \eqref{eq:pop-sq}
% to its minimum.

We start with the mathematical framework.
The input-output space $X \times Y$
is equipped with a $\sigma$-algebra $\mathcal{F}$,
which is complete with respect to the probability measure $P$.
The almost sure convergence is proved with respect to a countable sequence of datapoints $(x_1, y_1),\ldots$, the associated product $\sigma$-algebra and product probability measure, which we denote $\mathbb{P}$.
The prediction space $Z$ is equipped with a norm $\|\cdot\|$.
%We consider a class of parametric prediction functions
%$\{\varphi(w, \cdot) \, :\, w \in W\}$,
%where $W \subset \mathbb{R}^d$ is assumed bounded.
We consider the uniform pseudometric $\dist_\varphi$ on the parameter set $W$
\[
    \dist_\varphi(w, w') = \sup_{x \in X} \| \varphi(w, x) - \varphi(w', x) \|.
\]
We assume $W$ is bounded and we further make the following assumption on its size with respect to $\dist_\varphi$.

\begin{assumption}[On the "size" of $W$]\label{assump:cover}
For any $\varepsilon > 0$, there exists a finite set $T \subset W$
such that for every $w \in W$, there exists a
$w' \in T$ with $\dist_\varphi(w, w') \le \varepsilon$.
Such a $T$ is called a $\varepsilon$-cover of $W$,
and the size of the smallest such a $T$
is denoted $\covnum(\varepsilon)$.
\end{assumption}

%Let us illustrate this assumption is satisfied.
For example for the set of $d$-dimensional linear functions
$\varphi(w, x) = \trans{w}x$ for $\|w\|_2 \le 1$. If we take
the norm $\|z\| = |z|$ on the real line $Z = \mathbb{R}$, one can prove that $\log\covnum(W, \dist_\varphi, \varepsilon) \le C\, d \log(1/\varepsilon)$
for some absolute constant $C$ and normalized data (see e.g., \cite[Lemma 5.7]{wainwright_2019}).
% provided $\sup_{x \in X} \|x\|_2 \le 1$ (which is the case when the data is normalized).
The second standard assumption that we consider is on the loss function $\ell(\cdot,\cdot)$.

\begin{assumption}[On the loss]\label{assump:loss}
The map $(x, y) \mapsto \ell(y, \varphi(w, x))$
is measurable for every $w \in W$. The map
$w \mapsto \ell(y, \varphi(w, x))$ is continuous, and hence Borel measurable, for each $(x, y) \in X \times Y$.
Furthermore, the loss is
\begin{itemize}
\item $P$-almost surely bounded, i.e., $0 \le \ell(y, \varphi(w, x)) \le B$ for each $w \in W$,
\medskip
\item $M$-Lipschitz in the second argument, %(with respect to $\|\cdot\|$)
i.e., $|\ell(y, z) - \ell(y, z')| \le M\|z - z'\|$  for every $z, z' \in Z$.
\end{itemize}
\end{assumption}

We have the following result generalizing \eqref{eq:convergence}.
\begin{thm}
\label{thm:uniform-convergence-param}
Let Assumptions \ref{assump:cover} and \ref{assump:loss} hold. Fix $p \in (0, 1)$ and
assume that the minimizers of $\Rp$ and $\Rpn$ %over $W$
are attained:
%for every $n \ge 1$
\[
w^\star \in \argmin_{w \in W} \Rp(w)
\quad\text{and}\quad w_n^\star \in \argmin_{w \in W} \Rpn(w).
\]
Then,
%if $\covnum(W, \dist_\varphi, \varepsilon)$
%is finite for every $\varepsilon > 0$,
we have that $\Rp(w_n^\star) \to \Rp(w^\star)$ almost surely.
\end{thm}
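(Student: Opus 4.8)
The plan is the standard two-step consistency argument. First I would establish the uniform law of large numbers $\Delta_n := \sup_{w\in W}|\Rpn(w)-\Rp(w)| \to 0$ almost surely, and then deduce the claim by an elementary sandwiching. For the sandwiching: optimality of $w^\star$ for $\Rp$ gives $\Rp(w_n^\star)-\Rp(w^\star)\ge 0$, while
\[
\Rp(w_n^\star)-\Rp(w^\star) = \big(\Rp(w_n^\star)-\Rpn(w_n^\star)\big) + \big(\Rpn(w_n^\star)-\Rpn(w^\star)\big) + \big(\Rpn(w^\star)-\Rp(w^\star)\big),
\]
whose middle summand is $\le 0$ by optimality of $w_n^\star$ for $\Rpn$ and whose two outer summands are each at most $\Delta_n$ in absolute value. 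Hence $0 \le \Rp(w_n^\star)-\Rp(w^\star) \le 2\Delta_n \to 0$ almost surely, so everything reduces to the uniform bound on $\Delta_n$.

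The engine for bounding $\Delta_n$ is the variational formula \eqref{eq:def_min_cvar}. Set $g_{w,\eta}(x,y) = \eta + \tfrac{1}{1-p}\max\!\big(\ell(y,\varphi(w,x))-\eta,\,0\big)$, so that $\Rp(w)=\min_{\eta\in\RR}\mathbb{E}_P[g_{w,\eta}]$ and $\Rpn(w)=\min_{\eta\in\RR}\mathbb{E}_{\Pn}[g_{w,\eta}]$. Since $\ell$ takes values in $[0,B]$ by Assumption \ref{assump:loss}, the minimizing $\eta$ (a $p$-quantile of the $[0,B]$-valued variable $\ell(y,\varphi(w,x))$; cf.\ the remark after \eqref{eq:def_min_cvar}) lies in $[0,B]$, so both minima may be restricted to $\eta\in[0,B]$. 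Using $|\inf_a f(a)-\inf_a g(a)|\le \sup_a|f(a)-g(a)|$ whenever both infima are attained --- which they are here, by continuity of $\eta\mapsto\mathbb{E}[g_{w,\eta}]$ on the compact set $[0,B]$ --- and then taking $\sup_{w\in W}$, we obtain
\[
\Delta_n \;\le\; \widetilde\Delta_n := \sup_{w\in W,\ \eta\in[0,B]}\big|\mathbb{E}_{\Pn}[g_{w,\eta}] - \mathbb{E}_P[g_{w,\eta}]\big|.
\]
It therefore remains to prove $\widetilde\Delta_n\to 0$ almost surely, i.e.\ a uniform strong law over the function class $\{g_{w,\eta}: w\in W,\ \eta\in[0,B]\}$.

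For this I would run a covering-number argument. The functions $g_{w,\eta}$, $\eta\in[0,B]$, are measurable (Assumption \ref{assump:loss}), uniformly bounded (by $B+B/(1-p)$), and $(w,\eta)\mapsto g_{w,\eta}(x,y)$ is jointly Lipschitz: $(1+\tfrac1{1-p})$-Lipschitz in $\eta$ (since $t\mapsto\max(t-\eta,0)$ is $1$-Lipschitz) and $\tfrac{M}{1-p}$-Lipschitz in $w$ for $\dist_\varphi$ (by the $M$-Lipschitzness of $\ell$ and the definition of $\dist_\varphi$). Fix $\varepsilon>0$; combining an $\varepsilon$-cover of $W$ of size $\covnum(\varepsilon)$ from Assumption \ref{assump:cover} with an $\varepsilon$-cover of $[0,B]$ of size $\lceil B/\varepsilon\rceil+1$ gives finitely many pairs $(w_k,\eta_k)$ such that every $g_{w,\eta}$ is uniformly $L\varepsilon$-close to some $g_{w_k,\eta_k}$, where $L:=1+\tfrac1{1-p}+\tfrac{M}{1-p}$. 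For each fixed $k$, the strong law of large numbers yields $\tfrac1n\sum_{i=1}^n g_{w_k,\eta_k}(x_i,y_i)\to\mathbb{E}_P[g_{w_k,\eta_k}]$ almost surely; intersecting these finitely many almost sure events gives $\max_k\big|\mathbb{E}_{\Pn}[g_{w_k,\eta_k}]-\mathbb{E}_P[g_{w_k,\eta_k}]\big|\to 0$ almost surely, whence $\limsup_n\widetilde\Delta_n\le 2L\varepsilon$ almost surely. Applying this along $\varepsilon=1/m$, $m\in\mathbb{N}$, forces $\widetilde\Delta_n\to 0$ almost surely, which closes the argument.

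The step I expect to be the main obstacle is not the probabilistic estimate --- which is routine once the problem has been cut down to a finite cover plus a Lipschitz remainder --- but the reduction of the inner $\eta$-minimization to the fixed compact interval $[0,B]$ together with the legitimacy of passing $\min_\eta$ through $\sup_w$. This is precisely what forces one to argue via the variational representation \eqref{eq:def_min_cvar} rather than directly about superquantiles, and it crucially uses the boundedness of the loss in Assumption \ref{assump:loss} to keep all the relevant quantiles inside one fixed compact set uniformly over $w\in W$; the remaining measurability bookkeeping (needed so that $\widetilde\Delta_n$ is a genuine random variable) is then straightforward given Assumption \ref{assump:loss}.
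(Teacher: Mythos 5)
Your proposal is correct and follows essentially the same route as the paper's proof: the sandwich $0 \le \Rp(w_n^\star)-\Rp(w^\star) \le 2\sup_{w\in W}|\Rpn(w)-\Rp(w)|$, the reduction of the uniform deviation via the variational representation with $\eta$ restricted to $[0,B]$ (using boundedness of the loss), and a covering argument over $W\times[0,B]$ based on the joint Lipschitz continuity in $(w,\eta)$. The only deviation is in the last probabilistic step, where the paper applies Hoeffding's inequality at the cover points and concludes by Borel--Cantelli, while you invoke the ordinary strong law at the finitely many cover points and then diagonalize over $\varepsilon=1/m$; both arguments are valid, the paper's being quantitative and yours slightly more elementary.
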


% \begin{thm}
% \label{thm:uniform-convergence-param}
% Fix $p \in (0, 1)$ and suppose that the loss function $\ell$ is $P$-almost surely
% (a) bounded, i.e., $0 \le \ell(y, \varphi(w, x)) \le B$ for each $w \in W$,
% and,
% (b) $M$-Lipschitz in the second argument with respect to $\|\cdot\|$, i.e.,
% 	$|\ell(y, z) - \ell(y, z')| \le M\|z - z'\|$
% 	for every $z, z' \in Z$.
% Further, assume that the minimizers of $\Rp$ and $\Rpn$ over $W$
% are attained
% for every $n \ge 1$
% and
% let $w^\star \in \argmin_{w \in W} \Rp(w)$
% and $w_n^\star \in \argmin_{w \in W} \Rpn(w)$.
% Then, if $\covnum(W, \dist_\varphi, \varepsilon)$
% is finite for every $\varepsilon > 0$,
% we have that
% $\Rp(w_n^\star) \to \Rp(w^\star)$ almost surely.
% \end{thm}

\begin{proof}[Proof Sketch]
    We give a sketch here and defer technical details to Appendix.

    The key step in the proof is to show the uniform convergence
    \begin{equation}\label{eq:key0}
    \text{$\Rpn(w) \to \Rp(w)$ almost surely for all $w \in W$.}
    \end{equation}
    Indeed, once we have this,
    the result immediately follows as
	\begin{align}
		0 \le \Rp(w_n^\star) - \Rp(w^\star)
		&=
		\Rp(w_n^\star) - \Rpn(w_n^\star)
		+ \Rpn(w_n^\star) - \Rpn(w^\star)
		+ \Rpn(w^\star) - \Rp(w^\star)\\
		&\le
		2 \sup_{w \in W} |\Rpn(w) - \Rp(w)|
		\to 0,
	\end{align}
	%almost surely,
	where we use $\Rpn(w_n^\star) \le \Rpn(w^\star)$ in the second inequality.

    The proof of the uniform convergence follows
    the general approach (see e.g.\;\cite{zbMATH05133436}) adapted to variational expression of the superquantile\;\eqref{eq:def_min_cvar}.
    We introduce
	\[
		\bar \Rp(w, \eta)
		= \eta + \frac{1}{1-p} \mathbb{E}_{(x, y)\sim P}
		[\max(\ell(y, \varphi(w, x))-\eta,0)] \,,
	\]
	to write
	\[
	\Rp(w) = \min_{\eta \in [0, B]} \bar \Rp(w, \eta),
	\]
	%to restrict the minimization over $\eta$ to $[0, B]$.
	as well as analogous empirical version $\bar \Rpn(w)$.
	The proof now consists in two steps, for a given $\varepsilon > 0$
    \begin{itemize}
        \item to construct a cover $T$\;of $W\!\times\![0,\!B]$ from a cover of\;$W$\,(given by assumption);
        \item to control the convergence over the points of $T$, more precisely to control the probability of the event
        \[
		E_n(\varepsilon) = \bigcap_{(w, \eta) \in T} \left\{ \bar \Rpn(w, \eta) - \bar \Rp(w, \eta) \le \varepsilon / 2 \right\} \,.
	    \]
    \end{itemize}
	In fact, we show that
	%We finally get %for every $\varepsilon > 0$
	\begin{equation}
	    \sum_{n=1}^\infty
	    \mathbb{P}\Big(\sup_{w \in W }
	    \, |\Rpn(w) - \Rp(w)| > \varepsilon\Big)
	   % &\le
	   % \sum_{n=1}^\infty \mathbb{P}\Big(\exists (w, \eta) \in W \times [0, B] \, :\, |\bar R_n(w, \eta) - \bar R(w, \eta)| > \varepsilon \Big)  \\
	    \le
	    \sum_{n=1}^\infty \mathbb{P}\big(\overline E_n(\varepsilon)\big) < \infty,
	\end{equation}
from which we conclude the uniform convergence $\Rpn(w) \to \Rp(w)$ for all $w\in W$ using the Borel-Cantelli Lemma.
\end{proof}

\section{Recent Applications of the Superquantile in Machine Learning}\label{sec:newapplis}
% ===============================================================
% ===============================================================

We now turn to some recent applications of the superquantile in machine learning.

% In this section, we give brief introductions to some of recent applications in machine learning, involving the superquantile.

%\vspace*{-1ex}
% ===============================================================
\subsection{Conformity in Distributed Learning on Mobile Devices}
% ===============================================================

The superquantile can be leveraged in distributed learning on mobile devices to model conformity to the population~\cite{laguel2020device}. Each mobile device contains the data generated by a single user, and thus the data distribution across devices is highly heterogeneous.

Concretely, suppose we have $\nfl$ training devices with respective data distribution $q_i$ and losses $\Loss_i(w) = \mathbb{E}_{(x, y) \sim q_i}[\ell(y, \varphi(\x, x))]$. {\em Federated learning}~\cite{DBLP:journals/ftml/KairouzMABBBBCC21}
is a distributed learning paradigm which
aims to collaboratively learn a common model across all devices without moving data between devices. The empirical risk minimization approach to federated learning consists in assigning a weight $\alpha_i>0$ to each device to minimize the aggregate loss, which corresponds to an expectation over a mixture $p_{\alpha}$ of the training distributions $q_i$:
\begin{equation} \label{eq:fl}
    \Loss(w) = \!\sum_{i=1}^{\nfl} \alpha_i \Loss_i(w)
    = \mathbb{E}_{(x,y) \sim p_\alpha}[\ell(y, \varphi(\x, x))] ~\text{ with}
    \left\{
    \begin{array}{ll}
        p_\alpha=\sum_{i=1}^{\nfl} \alpha_i q_i\,, \\
        \sum_{i=1}^n \alpha_i = 1\,.\\
    \end{array}
    \right.
\end{equation}
While minimizing such an objective might offer good performance for test devices which conform to the population of training devices (i.e., distribution $q$ of the test device is close to $p_\alpha$), one can expect poor predictive performance %for any test device
when $q$ largely departs from $p_\alpha$. An alternative is to model the heterogeneity of devices by considering data distribution $p_\pi$ written as a convex combinaison of the training distributions, but with weights $\pi_i$ different from\;$\alpha_i$:
\vspace*{-1ex}
\[
    p_\pi = \sum_{i=1}^{\nfl} \pi_i q_i, \quad\text{ with }
        0\leq \pi_i \leq 1 \text{ and }
        \sum_{i=1}^{\nfl} \pi_i = 1.
\]
In this context,~\cite{laguel2020device} proposes to measure how close a test device's distribution $p_\pi$ is to the training distribution $p_\alpha$ by the so-called conformity level
\begin{equation}\label{eq:ppi}
    \text{conf}(p_\pi) = \min_{1\leq i \leq {\nfl}} \alpha_i / \pi_i ~~ \in (0,1].
\end{equation}
We see that the closer the conformity level is to $1$, the closer $p_\pi$ is to $p_\alpha$, and thus the device and its user tightly conform to the population trend. To learn a robust model $\x$ performing well on reasonably non-conforming devices, \cite{laguel2020device} proposes to find the best $\x$ for the set of devices with a conformity of at least a given threshold $c\in (0,1)$; this leads to the optimization problem
\begin{equation}\label{eq:deltaFL}
    \min_{w \in \mathbb{R}^d} ~\max_{p_\pi \in \mathcal{P}} ~\mathbb{E}_{(x,y) \sim p_\pi}[\ell(y, \varphi(\x, x))]\,, \quad\text{with}\quad
    \mathcal{P} = \left\{p_\pi
    % \text{ as \eqref{eq:ppi}}
    %\in \conv\{q_i\}
    : \text{conf}(p_\pi) \geq c\right\}.
\end{equation}
We observe now that the condition $\text{conf}(p_\pi) \geq c$ can be written $\pi_i/ \alpha_i \leq 1/c$ for all $i$. Thus this constraint coincides, for the level $p = 1-c$, with the constraint $q_i \leq \frac{1}{{\nfl}(1-p)}$ in the dual formulation of the superquantile \eqref{eq:def_max_cvar}; see more precisely the discrete version of the dual formulation \eqref{eq:dual_discrete}. The extensive computational experiments of \cite[Sec.\;4]{laguel2020device} show that such superquantile federated learning has, as expected, superior performances for heterogeneous devices. Here we provide a toy example illustrating the interest of the approach.

\begin{figure}[t!]
\centering
\vspace*{-1ex}
  \centerline{\includegraphics[width=6.8cm]{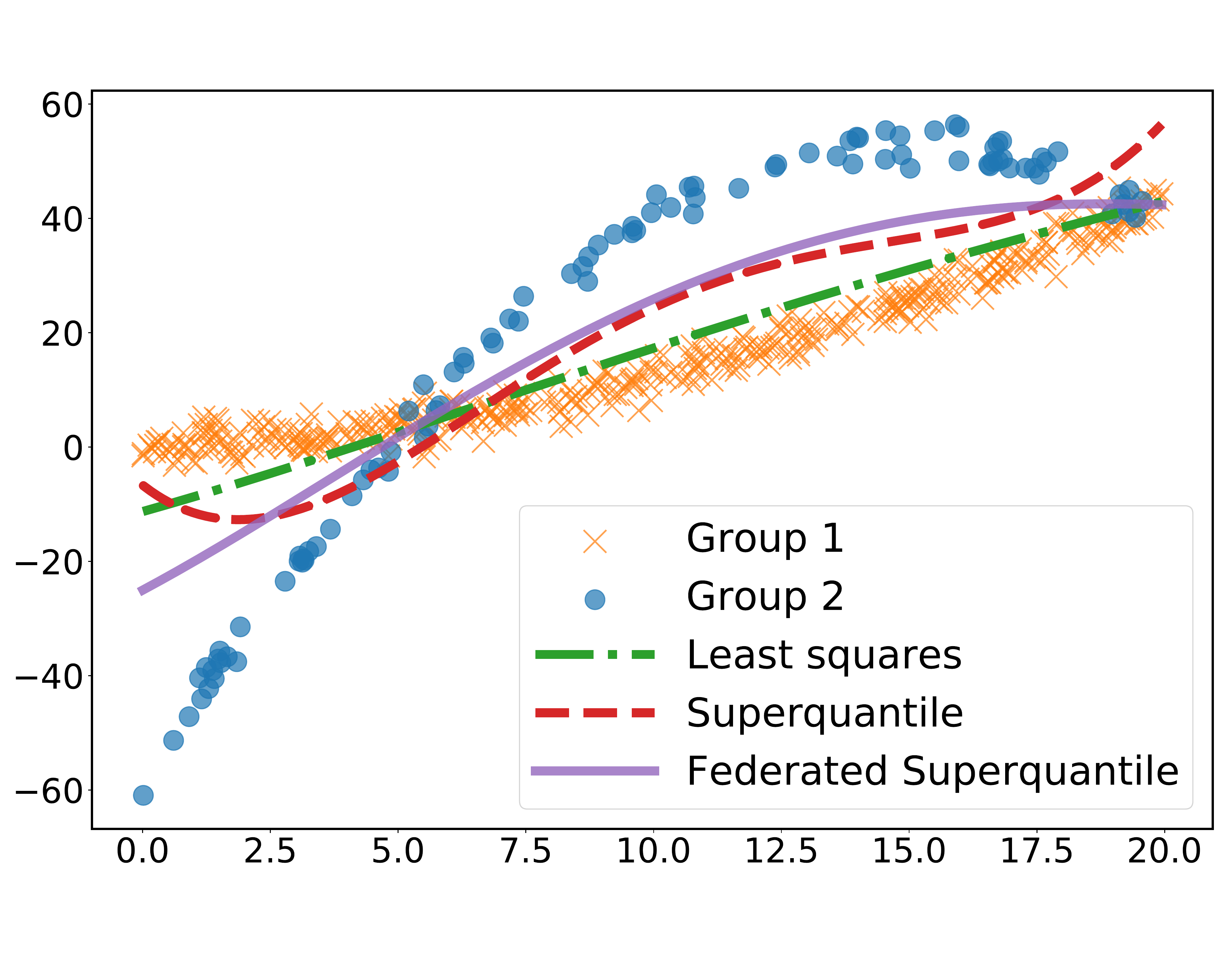}}
\vspace{-3ex}
\caption{Comparison of ERM~\eqref{eq:ERM}, superquantile minimization~\eqref{eq:ESM} and its federated counterpart~\eqref{eq:deltaFL} for the toy federated learning setting described in Example\;\ref{ex:fedreg}. %The goal is to have
We wish to have good performance on individual users.
% We wish to have commensurate performances among users,
Graphically, this is equivalent to fitting a curve to be at the same distance from the data-points of the conforming users (blue bullet) and the non-conforming user (orange cross).}
\vspace*{-2ex}
\label{fig:fairness_plot}
\end{figure}

\begin{example}[Federated regression]\label{ex:fedreg}
Consider a specific instance of Example~\ref{ex:safe_learning} in a federated setting. We consider that $80\%$ of the data
corresponds to four devices having the same data distribution following \eqref{eq:data}, while the remaining $20\%$ corresponds to a fifth device having its own distribution. In this example,
the solution to federated regression problem %~\eqref{eq:fl}
with $\alpha$ being the uniform distribution over the five devices coincides with the ordinary least squares model on the whole dataset. Figure\;\ref{fig:fairness_plot} shows this bivalent dataset: the blue points correspond to the data of the four first devices, and the red points correspond to the last device.  We would like to have a regression that captures worst-cases for both behaviours. We take the conformity level of $c = 1-p = 1/5$, using the knowledge on how the dataset is constructed.

Figure\;\ref{fig:fairness_plot} shows the the regression curves fit by \eqref{eq:ERM} (specifically, \eqref{eq:OLS} in this case), \eqref{eq:ESM}, and the \eqref{eq:deltaFL}. We can make three observations.
First, the standard model\;\eqref{eq:OLS} (in purple) tends to follow the trend imposed by the first four devices. Second, the superquantile model \eqref{eq:ESM} (in red) has better regression on worst-case data, irrespective of the group of the data point. Finally, the federated superquantile model\;\eqref{eq:deltaFL} finds, in contrast, a compromise between the two trends. Thus, federated superquantile regression better captures the (orange) data points of the non-conforming user.\qed
\end{example}

\vspace*{-1ex}
% ===============================================================
\subsection{Fairness-aware Machine Learning}
% ===============================================================

The superquantile naturally appears when considering the notion of fairness in machine learning (see \eg \cite{DBLP:conf/pkdd/KamishimaAAS12,DBLP:conf/chi/HolsteinVDDW19}), which we turn to next. %\;\cite{willaimson2019fairness}.

Fairness in machine learning is studied with reference to a sensitive attribute, such as race or gender; see \eg \cite{DBLP:conf/pkdd/KamishimaAAS12}.
Suppose that we have a population that can be partitioned unambiguously between $\nfair$ subgroups with respect to this attribute. We denote $\Loss(\x) = (\Loss_1(\x), \ldots, \Loss_{\nfair}(w))$ the vector of losses on each of the subgroups of the training set.
%for a parameter model $\x$.
% We view $\ell(w)$ as a categorical random
% variable which takes the value $F_k(w)$ with probability\,$\alpha_k$.
Fairness in such situation would require independence between the sensible attribute and either predicted value or averaged losses per group $\Loss_i(\x)$.
An \emph{ideal group fairness} of the model $\x$ would then imply that $\Loss_1(\x) = \cdots = \Loss_n(\x)$~\cite[Def.\;1]{willaimson2019fairness}; but such a model could be no better than random guessing in the worst case. So \cite{willaimson2019fairness} considers \emph{approximate group fairness} and introduces the notion of fairness risk measures.
As explained in detail in Section 4 and supplementary material of\;\cite{willaimson2019fairness}, key properties for fairness risk measures includes convexity, positively homogeneity and monotonicity: %\cite[Def.\;8]{willaimson2019fairness}. %translation invariant and averse.
the superquantile is thus a prominent example of such a measure. Experiments in  \cite[Sec.\;7]{willaimson2019fairness} show that superquantile indeed allows for a good balance between predictive accuracy and fairness violation. % measured by the average and absolute difference of the subgroup risks
For completeness, we provide here a simple illustration in the context of Example\;\ref{ex:fedreg}.

\begin{example}[Fair regression]\label{ex:fairreg}
Let us come back to the toy example of Example\;\ref{ex:fedreg}. We look at it with the perspective of fairness between the predominant group (the four blue users) and the minority group (the fifth ``red'' user). Table~\ref{table:user_level_metrics} compares (i) the average performance over the predominant group and (ii) the average performance on the minority group. We observe that the difference between these performance is minimal for the user-level superquantile model provided by \eqref{eq:deltaFL}, achieving better approximate group fairness.\qed
\end{example}

\begin{table}[ht]
\vspace{-2ex}
\begin{center}
% \begin{adjustbox}{max width=0.9\linewidth}
\begin{tabular}{lccc}
\toprule
Model  &  $\Loss_1(\x)$ (blue subgr.)  & $\Loss_2(\x)$ (red subgr.) \\
\midrule
ERM/least-squares \eqref{eq:OLS}& $4.59$ & $17.76$ \\
% $\bar Q_p\;-\;p=0.2$ & $16.5$ & $1.04$ & $6.57$ &  $57.0$ \\
superquantile \eqref{eq:ESM} & $9.88$ & $13.62$ \\
federated superquantile \,\eqref{eq:deltaFL} & $10.87$ & $11.46$ \\
\bottomrule
\end{tabular}
% \end{adjustbox}
\vspace{-1ex}
\caption{Average performances of each model over both subgroups.~\label{table:user_level_metrics}
\vspace{-2ex}}

% \caption{Average of $L_i(\x) = (y_i - (\x_2 x_i^2 + \x_1 x_i + \x_0))^2/2$ over both groups.~\label{table:user_level_metrics}}
% \caption{Average of residuals $r_i = |y_i - (\x_2 x_i^2 + \x_1 x_i + \x_0)|$ over both groups.~\label{table:user_level_metrics}}
\end{center}
\end{table}

% ===============================================================
\subsection{Adversarial Classification}
% ===============================================================

The superquantile also appears in generalized classification tasks when studying robustness to perturbations of data distributions\;\cite{DBLP:journals/corr/abs-2005-13815}.

In binary classification, we have $Y=\{-1,+1\}$ and the prediction function $\varphi(\x,x)$ correctly classifies a data point $(x,y)$ if
\[
y\,\varphi(\x,x) > 0.
\]
For an underlying data distribution $P$, we may want to choose $w$ so as either to minimize the probability  $\PP_{(x,y)\sim P}\big(y\,\varphi(\x,x) \leq 0\big)$ of encountering an error, or to control the distance $d(\x,(\bar x,\bar y))$ to misclassification of a data point $(\bar x,\bar y)$:
\[
d(\x,(\bar x,\bar y)) =
\inf_{x}
    \left\{
    \|x-\bar x\|^2\,:\,
    \bar y\,\varphi(\x,x) \leq 0
    \right\}\,.
\]
In this context,
robustness against perturbations of the data distribution $P$ can be guaranteed by
%The problem of
minimizing the worst-case error probability over a ball (\eg for Wasserstein distance $d_{\text{W}}$) around $P$ %the given distribution $P$
\begin{equation}\label{eq:wasser_class}
    \min_{\x} \sup_{Q:\; d_{\text{W}}(Q,P)\leq \epsilon}
\mathbb{P}_{(x,y)\sim Q}\big(y\,\varphi(\x,x) \leq 0\big) \,.
\end{equation}
%is shown to have nice properties.
Interestingly, optimal solutions of this problem coincide with those
% minimizing the superquantile of the distance to misclassification:
solving:
\begin{equation}\label{eq:advclass}
\min_{\x}~ [\Sp]_{(\bar x,\bar y)\sim P}\big(-d(\x,(\bar x,\bar y))\big)     \,,
\end{equation}
for a well-chosen $p$; see \cite[Theorem\;2.6]{DBLP:journals/corr/abs-2005-13815}.
%$p$ is the worst-case error probability
When the distance function $d$ has a computable closed form, formulation~\eqref{eq:advclass} is simpler to handle than~\eqref{eq:wasser_class}.
% and then handling of perturbations of distributions can be avoided.
%using the variational formulation~\eqref{eq:def_min_cvar} of the superquantile.
%This is still often a computationally hard problem because of its possible non-convexity and non-smoothness.
We refer to\;\cite{DBLP:journals/corr/abs-2005-13815} for results in the general case and for related literature.
% ===============================================================
%\subsection{Comparaison with 'empirical likelihood'}
% ===============================================================
%
%check with Zaid ?

% ===============================================================
\subsection{Risk-Sensitive Reinforcement Learning}
% ===============================================================
In a framework different from the supervised learning ones considered so far, the superquantile plays a role in risk-sensitive reinforcement learning. Reinforcement learning methods attempt to find decision rules to minimize a cumulative cost~\cite{sutton2018reinforcement} in a
sequential decision-making setting.

Concretely, a learning agent acts in a Markov decision process %with a state space and action space $A$
using a policy\;$\pi$ which maps a state
%in a discrete state space
to a distribution over an action space.
The agent's aim is to minimize the total cost $c(\tau) = \sum_{i=1}^n c(s_i)$ of a trajectory $\tau=(s_1,\ldots,s_n)$ of states taken by the agent while following the policy $\pi$.
Letting $\Gamma(\pi)$ denote the induced distribution over trajectories of length $n$ under policy $\pi$, standard reinforcement learning methods minimize the expected cumulative cost as
\[
 \min_{\x} \mathbb{E}_{\tau \sim \Gamma(\pi_{\x})}[c(\tau)] \,,
\]
where $\x \in \RR^d$ parameterizes the policy $\pi_w$.
The so-called policy gradient methods aim to solve this by first-order optimization methods where the gradient of the objective is estimated by Monte Carlo simulations~\cite{sutton2000policy}.

However, in safety-critical applications, we are interested in accounting for unlikely events with high cost~\cite{zbMATH03378669}. In particular, \cite{morimura2010nonparametric} considers sensitivity to risky high-cost trajectories by minimizing the superquantile counterpart
\begin{equation}\label{eq:RL}
    \min_{\x} \, [\Sp]_{\tau \sim \Gamma(\pi_{\x})}[c(\tau)] \,.
\end{equation}
This risk-sensitive reinforcement learning setting thus leads to similar superquantile problems as in the supervised learning setting. We refer to\;\cite{tamar2015policy} on how to adapt policy gradient methods to estimate the gradient of the objective with respect to the parameters.
% However, it differs in that only approximate first-order oracles are tractable with the use of Monte Carlo simulations.

% ===============================================================
% ===============================================================
\section{Efficient (Sub)differential Calculus}\label{sec:sub}
% ===============================================================
% ===============================================================

The applications %in machine learning
sketched in the previous section reveal optimization problems with objective functions\footnote{In coherence with the previous section and to comply with common notation in machine learning, we stick to the notation $\x$ for the variable of the functions.} written as the composition of a superquantile and a general loss function
%\vspace*{-0.5ex}
\begin{equation}\label{eq:f}
f(\x) = \Sp(\Loss(\x)).
\end{equation}
%\vspace*{0.5ex}
For example,\;\eqref{eq:ESM} involves $L\colon\RR^d\rightarrow\RR^n$ defined component-wise for each data point by $L_i(w) =\ell(y_i, \varphi(w,x_i))$; similar expressions follow from \eqref{eq:deltaFL}, \eqref{eq:advclass} and \eqref{eq:RL}.
We notice first that $L$ is usually non-convex (\eg with $\varphi$ as\;\eqref{eq:NN}) but smooth (\eg with $\ell$ as\;\eqref{eq:losses}).

In this section, we provide easy-to-implement expressions of subgradients of superquantile-based functions~\eqref{eq:f}, in Section\;\ref{sec:sub2}, and of gradients of smoothed approximations of them in Section\;\ref{sec:smooth}. Finally, in Section\;\ref{sec:smooth-comp}, we compare the proposed smoothing with others considered in the literature (\eg\cite{DBLP:journals/coap/ChenM96,luna2016approximation}). Computing the (sub)gradients would be the first step toward using first-order optimization algorithms for solving superquantile problems. Though simple, this idea of using first-order methods is not widely used for such problems; among the few exceptions, we mention the PhD thesis\;\cite{miranda2014superquantile} using subgradient algorithms (in a special case) and our conference paper\;\cite{laguel2020first} presenting a toolbox for using first-order methods in superquantile learning. %in a \texttt{spqr}.
The developments of this section detail and extend those of \cite[Sec.~3]{laguel2020first}. %For the numerical illustrations

% Optimization exists for classical applications; new applications in machine learning call for new optimization methods; some ad hoc methods exist; but so far few general tools.

\vspace{-1ex}
% ===============================================================
\subsection{Computing the Superquantile}\label{sec:comp}
% ===============================================================

\vspace{-0.5ex}
For the practical developments of this section, we consider a data-driven setting where the random variable $U$ takes equiprobable %finitely many
values $u_1,\ldots,u_n$\footnote{
In the sequel, we make a slight abuse of notation by not distinguishing between the random variable $U$ and the vector of its equiprobable realizations $u = (u_1,\ldots,u_n)$. Thus we consider the superquantile function $\Sp\colon\RR^n\to\RR$ as a function of $u\in \RR^n$, and we study the differentiability properties of compositions with $\Sp$.
}. %gathered in a dataset $\D_U$.
In this setting, the three representations of superquantile
%(integral, variational, and dual)
recalled in Section\;\ref{sec:setting} take explicit forms that are especially
interesting from a computational perspective.
\begin{figure}[!t]
\centering
  \centerline{\includegraphics[width=9cm]{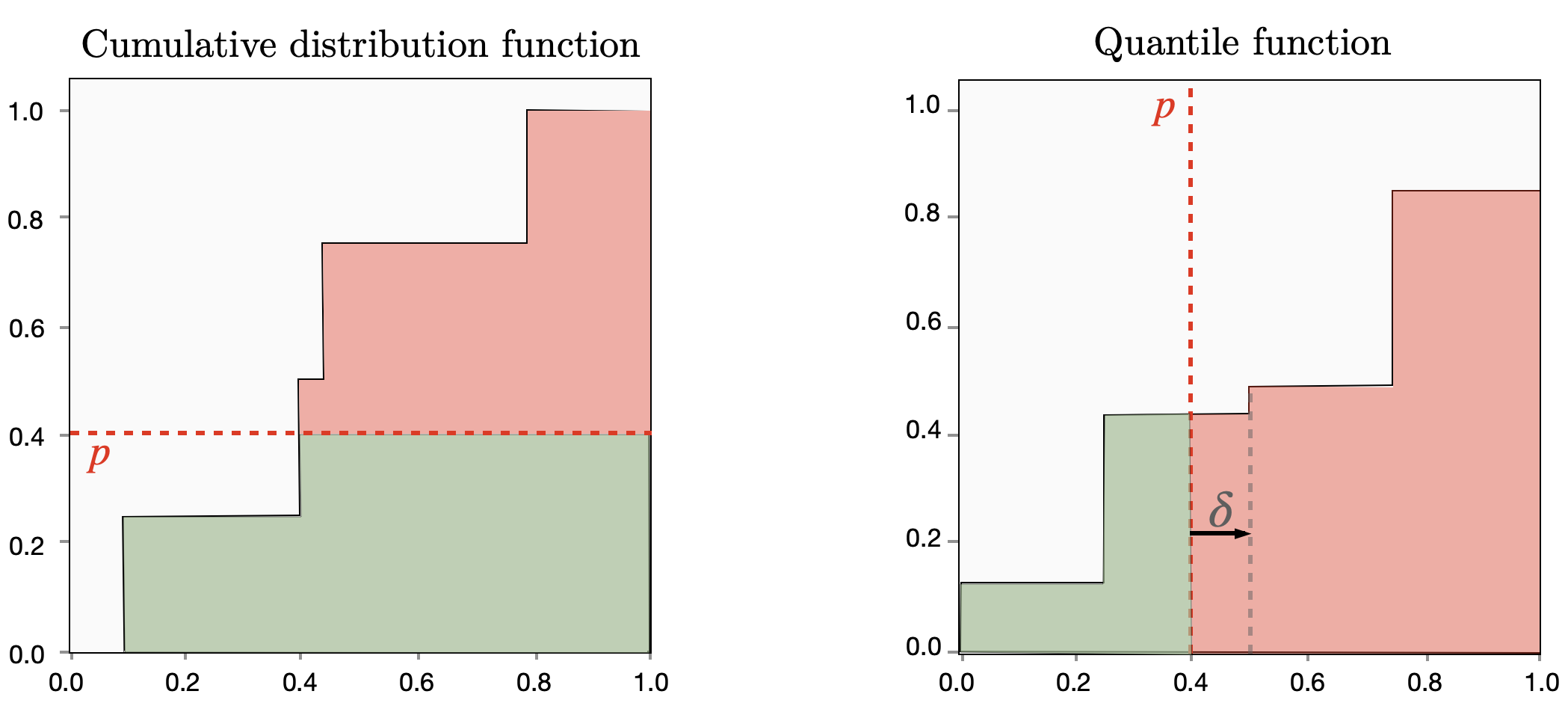}}
% \vspace{-7mm}
\caption{Illustration of the integral expression of the superquantile. Cumulative distribution function (on the left) and quantile function (on the right) are inverse one of the other. $\Sp(U)$ is obtained by averaging the quantiles greater than $p$ (red section on right graph).\label{fig:delta}}
% The distance from $p$ to the next discontinuity point is $\delta = F_U(Q_p(U))-p = \frac{1}{n} (n - |I_>|) - p$.}% of $p \mapsto Q_p(U)$.}
\end{figure}

\begin{itemize}
    \item \textit{Integral representation.} By splitting the integral, \eqref{eq:def_cvar} can be written as
    \begin{equation}\label{eq:integral_discrete}
    \Sp(U) = \frac{1}{n(1\!-\!p)}\!\! \sum_{~i \in I_>} \!\!u_i  %\delta_{u_i > \Qp(U)}
        + \frac{\delta}{1\!-\!p} \Qp(U)
        ~~~\text{with $I_> \!= \{i: u_i\!>\!\Qp(U)\}$.}
        \end{equation}
    %with $\delta = (F_U(\Qp(U)) - p)$.
    This expression involves the distance from $p$ to the next discontinuity point of the quantile function $p' \mapsto Q_{p'}(U)$ (see Figure\;\ref{fig:delta}):
    \begin{equation}\label{eq:delta}
    \delta = F_U(Q_p(U))-p = \frac{1}{n} (n - |I_>|) - p.
    \end{equation}
    Thus \eqref{eq:integral_discrete} gives an efficient way to compute superquantiles from the following three step procedure: (a) compute the $p$-quantile with the specialized algorithm (called \texttt{quickfind}) of complexity $\bigO(n)$;
(b) select all values greater or equal than the quantile;
(c) average values along \eqref{eq:integral_discrete}.

\medskip
\item \textit{Dual representation.} The expression \eqref{eq:def_max_cvar} simplifies to
\begin{equation}\label{eq:dual_discrete}
\Sp(U)= \max_{q \in \Delta_p} ~\trans{q}\!u
~~\;\text{with } \Delta_p\!=\!\left\{\!q \in \mathbb{R}^n_+\!:\! \sum_{i=1}^n q_i = 1,  q_i \le\!\frac{1}{n(1-p)}\!\right\}\!.
\end{equation}
% \begin{align}\label{eq:dual_discrete}
% \Sp(U)&= \max_{q \in \Delta_p} \sum_{i=1}^n q_i\,u_i\\
% &\text{with } \Delta_p=\left\{q \in \mathbb{R}^n_+\,:\, \sum_{i=1}^n q_i = 1,  0\leq q_i \le \frac{1}{n(1-p)}\; \forall i  \right\}.
% \end{align}
In words, the superquantile is the support function of the intersection of the simplex with a box (see Figure\;\ref{fig:support}). This problem also corresponds to a classical optimization problem, called fractional knapsack problem, which is solved, after sorting the\;$u_i$'s, by a simple greedy strategy of the associated $q_i$'s~\cite{zbMATH07061852}.
% As a first consequence, we can obtain an expression the subdifferential of $\Sp$ as an application of the standard formula for support function.
For our purposes, this expression of
superquantile, as a direct max, is useful when applying dual smoothing techniques; see Section\;\ref{sec:smooth}.

\medskip
\item \textit{Variational representation.} The expression \eqref{eq:def_min_cvar} writes
\begin{equation}\label{eq:min_discrete}
    \Sp(U) = \inf_{\eta \in \mathbb{R}} \left\{\eta + \frac{1}{n(1-p)} \sum_{i=1}^n \max(u_i - \eta, 0)\right\}.
\end{equation}
This expression is often used in solving approaches for superquantile optimization; see \eg the progressive hedging for risk-averse multistage programming of \cite{rockafellar2018solving}.
Here, this expression will
provide a nice interpretation of the infimal convolution smoothing
(Corollary\;\ref{cor:corres}).
\end{itemize}

\begin{figure}[!t]
\centering
  \centerline{\includegraphics[width=4.5cm]{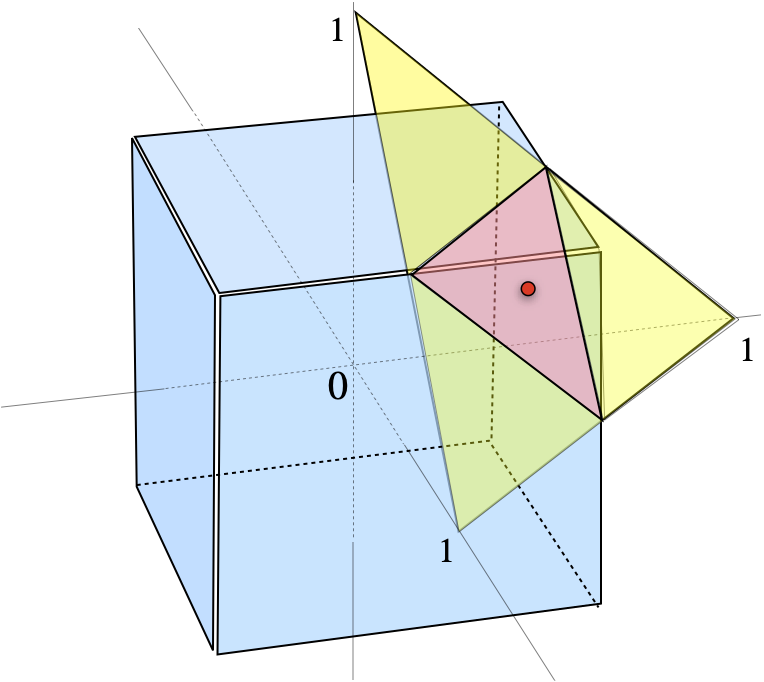}}
% \vspace{-7mm}
\caption{Illustration of the dual expression of the superquantile. $\Sp$ is the support function of the red
%compact
polytope. The red %center
point represents the uniform distribution.
\vspace*{-1ex}\label{fig:support}}
\end{figure}

% Thus $L(\x)$ can be represented as a vector of $\mathbb{R}^n$:  for $i \in \{1,\dots,n\}$,
% $L(\x)_i$ denotes the value of $L(\x)$ when $X = x_i$ and $Y = y_i$.
% The loss function $L$ is then a function from $\mathbb{R}^d$ to $\mathbb{R}^n$.

\vspace{-2ex}
% ===============================================================
\subsection{Subdifferentials}\label{sec:sub2}
% ===============================================================

In this section, we provide explicit and implementable expressions of the subdifferential of superquantile-based functions. Expressions of (convex) subdifferential of superquantile are well-known in general settings; see \eg\cite{ruszczynski2006optimization} for a thorough study. Here we study non-convex subdifferentials and derive concrete expressions in the data-driven context; we give direct proofs as applications of basic definitions and properties of nonsmooth analysis.

% Note that gradients of superquantile-based functions for general distributions are obtained, with advanced tools, in. Interestingly, the nonsmoothness of these functions occurs with discrete distributions only.
% It is not always simple to turn these geenral expression into easy-to-compute expressions.

We start by recalling the standard notions of subgradients for nonsmooth functions (in finite dimension), following the terminology of\;\cite{rockafellar2009variational}.
For a function $\psi\colon\RR^d\rightarrow\RR\cup\{+\infty\}$,
the regular (or Fr{\'e}chet) subdifferential of $\psi$ at $\bar{\x}$ (such that $\psi(\bar \x) <+\infty$) is defined by
\[
\partial^R \psi(\bar{\x}) =\left\{s\in \mathbb{R}^d: ~\psi(\x)\geq
\psi(\bar{\x})+{s}\T(\x-\bar{\x}) + \smallo(\|{\x-\bar{\x}}\|)\right\}.
\]
The regular subdifferential thus corresponds to the set of gradients of smooth functions that are below $\psi$ and coincide with it at $\bar\x$.
The limiting subdifferential is the set of all limits produced by regular subgradients
\[%\begin{equation}\label{4a}
\partial^L \psi(\bar{\x})= \limsup_{\x\rightarrow \bar{\x}, \psi(\x)\rightarrow \psi(\bar{\x})}
\partial^R \psi(\x).
\]%\end{equation}
%where $\limsup$ is the set upper limit in the sense of Kuratowski.
These notions generalize (sub)gradients of both smooth functions and convex functions. The two subdifferentials coincide and reduce to the singleton $\{\nabla \psi(\bar \x)\}$ when $\psi$ is smooth and to the standard subdifferential from convex analysis when $\psi$ is convex.

For the function\;\eqref{eq:f}, which is the composition of a convex function and a continuously differentiable function, we get
from basic chain rules that the two subdifferentials coincide; we simply denote it by $\partial f(\x)$. Moreover the dual representation\;\eqref{eq:dual_discrete} expressing $\Sp$ as a support function allows to obtain readily an expression of the subdifferential of $\partial \Sp$ and, as a result, of the one of $f$. We formalize all this in the following proposition.

% Indeed, for a (convex) subgradient $s\in \partial \Sp(\Loss(\bar\x))$, we have
% \begin{eqnarray*}
% % \begin{aligned}
% % \left.
% \Sp(\Loss(\x)) &\geq& \Sp(\Loss(\bar\x)) + \trans{s}(\Loss(\x)-\Loss(\bar{\x}))\notag\\
% &=&\Sp(\Loss(\bar\x)) + \trans{s}\big(\nabla\Loss(\bar{\x})(\x-\bar{\x}) + \smallo(\|\x-\bar{\x}\|)\big)\notag\\
% &=&\Sp(\Loss(\bar\x)) + \trans{s}\big(\nabla\Loss(\bar{\x})(\x-\bar{\x})\big) + \smallo(\|\x-\bar{\x}\|)\notag
% \end{eqnarray*}
% which gives the expression of $\partial^R f(\x)$. Outer-semi-continuity of the subdifferential of the convex function $\Sp$ as well as continuity of $\nabla\Loss$ give that the two subdifferentials coincide, and we simply denote it by $\partial f(\x)$.

\begin{prop}[Explicit subdifferential of superquantile-based functions]\hfill\label{thm:sub}
Consider the superquantile-based function \eqref{eq:f} with $L$ continuously differentiable. We have
\begin{equation}\label{eq:sub}
    \partial f(\bar\x) = \Big( \partial^L f(\bar\x) =  \partial^R f(\bar \x) =\!\Big)~ \nabla\Loss(\bar \x)^*\partial\, \Sp(\Loss(\bar\x))
\end{equation}
where $\nabla\Loss(\bar\x)^*$ is the adjoint of the Jacobian of $\Loss$ at $\bar\x$ and $\partial\, \Sp(\Loss(\bar\x))$ the (convex) subdifferential of $\Sp$ taken at $L(\bar\x)$.
Moreover, for $\x \in \mathbb{R}^d$, compute $L(\w)\in\RR^n$ and $Q_p(L(\x))\in \RR$. Consider $I_>$ the set of indices such that $L_i(\x) > Q_p(L(\x))$ and $I_=$ the set of indices such that $L_i(\x) = Q_p(L(\x))$. Then the subdifferential of $f$ at $\x$ can be written with the gradients $\nabla L_i(\x)$ for $i\in I_>\cup I_=$, as follows
\begin{equation}
\partial f(\x)
~=~ \frac{1}{n(1-p)}\!\sum_{i\in I_>} \nabla L_i(\x)
~+~ \frac{\delta}{1-p} \conv\left\{ \nabla L_i(\x) : i \in I_=\right\}.
\end{equation}
%where $\delta$ is defined by\;\eqref{eq:delta}.
%$\delta=\sum_{i=1}^n \mathbbm{1}_{L_i(\x) \leq Q_p(L(\x))}  - p$
% $\delta = \frac{1}{n} (n - |I_>|) - p$ is the distance from $p$ to the next discontinuity of the quantile function of L(w).
\end{prop}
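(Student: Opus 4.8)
The plan is to prove the two claims of Proposition~\ref{thm:sub} separately: first the identity \eqref{eq:sub} expressing $\partial f$ via a chain rule and the subdifferential of $\Sp$, and then the concrete finite-dimensional formula in terms of the gradients $\nabla L_i(\x)$. For the first part, I would recall that $\Sp\colon\RR^n\to\RR$ is finite, convex, and positively homogeneous, hence globally Lipschitz, while $\Loss\colon\RR^d\to\RR^n$ is continuously differentiable by assumption. Then $f = \Sp\circ \Loss$ is a composition of a (locally) Lipschitz convex function with a $C^1$ map. Invoking the standard chain rule for limiting/regular subdifferentials under these hypotheses (see \cite{rockafellar2009variational}; the $C^1$ inner map makes the chain rule exact with no constraint qualification needed), one gets $\partial^L f(\bar\x) = \partial^R f(\bar\x) = \nabla\Loss(\bar\x)^*\,\partial\Sp(\Loss(\bar\x))$, where on the right $\partial\Sp$ is the convex subdifferential; the fact that the regular and limiting objects coincide follows because the convex function $\Sp$ is (Clarke) regular and composition with a smooth map preserves regularity.

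For the second part I would compute $\partial\Sp(\u)$ explicitly at the point $\u = \Loss(\bar\x)$ using the dual representation \eqref{eq:dual_discrete}, which exhibits $\Sp$ as the support function $\Sp(\u) = \max_{q\in\Delta_p} \trans{q}\u$ of the compact convex polytope $\Delta_p$. A standard fact about support functions is that $\partial\Sp(\u) = \argmax_{q\in\Delta_p}\trans{q}\u$, the face of $\Delta_p$ exposed by $\u$. So the task reduces to identifying this argmax set. Sorting the coordinates of $\u$ and solving the fractional-knapsack LP greedily (as recalled in Section~\ref{sec:comp}), the maximizers put weight $1/(n(1-p))$ on every index with $u_i > Q_p(\u)$, weight $0$ on every index with $u_i < Q_p(\u)$, and distribute the leftover mass $\delta/(1-p)$ (with $\delta$ as in \eqref{eq:delta}) arbitrarily — subject to the box constraint — over the ``tie'' indices $I_=$ where $u_i = Q_p(\u)$. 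This description says precisely that
\[
\partial\Sp(\u) = \frac{1}{n(1-p)}\sum_{i\in I_>} e_i + \frac{\delta}{1-p}\,\conv\{e_i : i\in I_=\},
\]
where $e_i$ is the $i$-th standard basis vector; one should check the arithmetic that the leftover mass is exactly $\delta/(1-p)$ and that each tie index can carry at most $1/(n(1-p))$, so the coefficient in front of the convex hull is consistent with a genuine probability-like distribution on $I_=$ scaled by $\delta/(1-p)$. Applying $\nabla\Loss(\bar\x)^*$ — which acts by $q \mapsto \sum_i q_i \nabla L_i(\bar\x)$ since the $i$-th row of the Jacobian is $\trans{\nabla L_i(\bar\x)}$ — and using linearity of the adjoint (it commutes with the convex hull and with the scaling) yields the stated formula
\[
\partial f(\x) = \frac{1}{n(1-p)}\sum_{i\in I_>}\nabla L_i(\x) + \frac{\delta}{1-p}\,\conv\{\nabla L_i(\x) : i\in I_=\}.
\]

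The main obstacle I anticipate is the careful bookkeeping in identifying $\argmax_{q\in\Delta_p}\trans{q}\u$: one must handle the ``boundary'' cases where $p$ is (or is not) a discontinuity point of the quantile function, i.e., whether $\delta = 0$ or $\delta > 0$, and verify that the greedy knapsack solution really characterizes the \emph{entire} optimal face rather than a single vertex. When $\delta = 0$ the tie term vanishes and the subdifferential is the singleton $\{\frac{1}{n(1-p)}\sum_{i\in I_>} e_i\}$ (so $f$ is differentiable there in the smooth case); when $\delta > 0$ the convex-hull term is genuinely set-valued. Getting this case analysis and the counting of indices (relating $|I_>|$, $|I_=|$, $n$, $p$, and $\delta$ via \eqref{eq:delta}) exactly right is the delicate routine part; everything else is an application of textbook chain rules and the support-function subdifferential formula.
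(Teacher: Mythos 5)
Your proposal follows essentially the same route as the paper: the chain rule for the composition of the convex, finite-valued $\Sp$ with the $C^1$ map $\Loss$ (giving coincidence of the regular and limiting subdifferentials and the formula \eqref{eq:sub}), then identification of $\partial\Sp(\Loss(\x))$ as the solution set of the fractional knapsack problem \eqref{eq:dual_discrete} via the support-function subdifferential, and finally the push-forward by $\nabla\Loss(\x)^*$ with the leftover mass $\delta/(1-p)$ spread over the tie indices. The bookkeeping point you flag (ties, $\delta=0$ versus $\delta>0$, and the box constraint on $I_=$) is handled in the paper in exactly the same greedy-knapsack way, so your argument is correct and matches the paper's proof.
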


\begin{proof}
We apply the chain rule of \cite[10.6]{rockafellar2009variational} to the composition $\Sp\circ L$: we have that $\Sp$ is convex with full domain, which implies that the two subdifferentials\footnote{
Remark on the Clarke subdifferential:
As another by-product of the chain rule \cite[Thm. 10.6]{rockafellar2009variational}, the set of horizon subgradients of $f$ is reduced to $0$ since so is the one of $\Sp$ (convex and defined on $\RR^n$).}
As a consequence, the Clarke subdifferential is the convex hull of the limiting subdifferential \cite[Thm. 8.49]{rockafellar2009variational}. Thus we have, in our case, that the three subdifferentials (regular, limiting and Clarke) coincide.
of $f$ coincide (\ie $f$ is regular in the terminology of \cite{rockafellar2009variational}) and we have \eqref{eq:sub}.

Since $\Sp$ is the support function of the set $\Delta_p$, standard subdifferential calculus~\cite[Cor.\;4.4.4]{hull} gives that
$\partial \Sp(\Loss(\x))$ is the set of optimal solutions of~\eqref{eq:dual_discrete} with $\X=\Loss(\x)$. Knowing $I_>$ and $I_=$, the so-called fractional knapsack problem\;\eqref{eq:dual_discrete} can be solved by the simple greedy strategy~\cite{zbMATH07061852} of taking the largest $q_i$ for $i\in I_>$ and completing to $1$ with the $q_i$ for $i\in I_=$. Thus
%we have that $q=(q_1,\ldots,q_n)$ is an optimal solution of\;\eqref{eq:dual_discrete} if and only if
\[
\text{$q$ %=(q_1,\ldots,q_n)$
solution of\;\eqref{eq:dual_discrete}} \iff \left\{
    \begin{array}{ll}
        q_i = \frac{1}{n(1-p)} & \mbox{if } i \in I_> \\
        0 \leq q_i \leq \frac{1}{n(1-p)} & \mbox{if } i \in I_= ~~\text{s.t.~} \sum_{i\in I_=} q_i = \frac{\delta}{1-p}\\
        q_i = 0 & \mbox{otherwise. }\\
    \end{array}
\right.
\]
By~\eqref{eq:sub}, this gives:
\[
    \partial f(w) = \frac{1}{n(1-p)} \sum_{i \in I_>} \nabla L_i(w) + \left\{\sum_{i\in I_=} q_i \nabla L_i(w), \text{ s.t. } \left\{
    \begin{array}{ll}
        0\leq q_i \;\forall i \in I_=\\
        \sum_{i \in I_=} q_i = \frac{\delta}{1-p} \\
    \end{array}
\right. \right\}.
\]
Finally, introducing weights $\alpha_i = \frac{q_i(1-p)}{\delta}$ for $i\in I_=$, the right-hand term can be written as the convex hull of $\nabla \Loss_i(\x)$ for $i\in I_=$, which gives the expression.
\end{proof}
% \[
%  \partial f(w) = \frac{1}{n(1-p)} \sum_{i \in I_>} \nabla L_i(w) + \frac{d}{1-p} \left\{\sum_{i\in I_=} \alpha_i \nabla L_i(w), \text{ s.t. } \sum_{i \in I_=} \alpha_i = 1 \right\}
% \]

We observe that the expression of $\partial f(\x)$ does not involve the gradients of all the $L_i$'s, but only of those associated to the largest values.
We also see that $f$ is differentiable at\;$\x$ if and only if $I_=$ is reduced to a singleton. The objective function is not differentiable in general, which poses a problem for a direct application of machine-learning gradient-based algorithms.

% ===============================================================
\subsection{Smoothing by Infimal Convolution}\label{sec:smooth}
% ===============================================================

In this section, we study a smoothing of nonsmooth superquantile-based functions \eqref{eq:f}. We propose to use the infimal convolution smoothing of~\cite{nesterov2005smooth}; the comparison to other smoothing approaches is postponed to the next section.

Following the guidelines laid out by \cite{beck2012smoothing}, we smooth only the superquantile $\Sp$ rather than the whole function\;$f$. Thus, we consider
\begin{equation}\label{eq:fnu}
\fnu(\x) = \Spnu(\Loss(\x))\,, \quad \text{for $\Spnu$ a smooth approximation of $\Sp$.}
\end{equation}
Regularizing the dual representation~\eqref{eq:dual_discrete} of superquantile, we consider the function, parameterized by the smoothing parameter $\nu$,
% TODO[KP]: D is already used for data distribution
\begin{equation}\label{eq:primal}
\Spnu(\X) = \max_{q \in \Delta_p} \left\{ \trans{q}\X - \nu D(q) \right\}\,,
\end{equation}
for a given strongly convex function $D$. The following proposition establishes that the resulting function $\fnu$ as \eqref{eq:fnu} is a smooth approximation of $f$, as a direct application of \eg\citep[Theorem 4.1, Lemma 4.2]{beck2012smoothing}, or \cite[Theorem\;1]{nesterov2005smooth}.

\begin{prop}[Smoothed approximation]\label{thm:smooth_approx}
The function $\fnu$ defined by \eqref{eq:fnu} (with $\Spnu$ in  \eqref{eq:primal}) provides a global approximation of $f$, i.e.,
\[
\fnu(\x) \leq f(\x) \leq  \fnu(\x) + \frac{\nu}{2} \qquad \text{for all $\x \in \mathbb{R}^d$.}
\]
Moreover $\Spnu$ is differentiable, with $\nabla \Spnu(u)$ being the argmax of \eqref{eq:primal}, unique by strong convexity of $D$.
When $L$ is differentiable,\;$\fnu$\;is differentiable as well,\;with
\begin{equation}\label{eq:nablafnu}
\nabla \fnu(\x) = \jaco \Loss(\x)^*\nabla \Spnu(\Loss(\x)).
\end{equation}
\end{prop}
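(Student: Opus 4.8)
The plan is to establish the three assertions in order, recognizing that the two-sided estimate and the differentiability of $\Spnu$ are exactly the conclusions of the Nesterov--Beck--Teboulle smoothing machinery, while the differentiability of $\fnu$ is then just the chain rule for a differentiable convex function composed with a $C^1$ map. Throughout I would work with the discrete dual representation \eqref{eq:dual_discrete}, so that $\Sp$ is the support function of the compact convex polytope $\Delta_p$ and $\Spnu$ in \eqref{eq:primal} is its regularization by the strongly convex $D$.

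For the sandwich $\fnu(\x) \le f(\x) \le \fnu(\x) + \nu/2$, I would first normalize $D$ so that $0 \le D(q) \le 1/2$ on $\Delta_p$: if $D$ is merely given as strongly convex, replace it by an affine-rescaled copy (which is still strongly convex and only rescales $\nu$), using that $D$ is continuous and $\Delta_p$ compact. Then the upper bound is immediate, since for every $q \in \Delta_p$ one has $\trans{q}\X - \nu D(q) \le \trans{q}\X$, and taking the maximum over $\Delta_p$ gives $\Spnu(\X) \le \Sp(\X)$. For the lower bound I would pick a maximizer $q^\star$ of the unregularized problem \eqref{eq:dual_discrete} and evaluate: $\Spnu(\X) \ge \trans{(q^\star)}\X - \nu D(q^\star) \ge \Sp(\X) - \nu/2$. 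Specializing to $\X = \Loss(\x)$ and recalling $f(\x) = \Sp(\Loss(\x))$, $\fnu(\x) = \Spnu(\Loss(\x))$ yields the claim.

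For differentiability, I would observe that $\X \mapsto \Spnu(\X)$ is, up to the scaling by $\nu$, the Legendre--Fenchel conjugate of $D + \iota_{\Delta_p}$, which is $\sigma$-strongly convex for some $\sigma>0$ with nonempty compact domain; hence the inner maximization in \eqref{eq:primal} has a unique solution $q_\nu(\X)$ for each $\X$, and by Danskin's theorem (equivalently, by the standard fact that the conjugate of a strongly convex function is everywhere differentiable, \cite[Thm.~1]{nesterov2005smooth} or \cite[Thm.~4.1, Lem.~4.2]{beck2012smoothing}) $\Spnu$ is differentiable with $\nabla\Spnu(\X)=q_\nu(\X)$. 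Since $\Spnu$ is then convex and differentiable on all of $\RR^n$ and $\Loss$ is continuously differentiable by hypothesis, the composition $\fnu=\Spnu\circ\Loss$ is differentiable with $\nabla\fnu(\x)=\jaco\Loss(\x)^*\nabla\Spnu(\Loss(\x))$, which is \eqref{eq:nablafnu}; one may also record that $\nabla\Spnu$ is $1/(\nu\sigma)$-Lipschitz, though the statement does not need it. The only mildly delicate point is the bookkeeping around normalizing $D$ so as to produce exactly the constant $\nu/2$; the rest is a routine invocation of convex-conjugacy and composition rules, which is why the proposition is presented as a direct consequence of \cite{nesterov2005smooth,beck2012smoothing}.
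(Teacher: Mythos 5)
Your proposal is correct and follows essentially the same route as the paper, which gives no separate argument but invokes exactly this machinery (\cite[Thm.~4.1, Lem.~4.2]{beck2012smoothing}, \cite[Thm.~1]{nesterov2005smooth}): the sandwich by comparing the regularized and unregularized maxima over $\Delta_p$, differentiability of $\Spnu$ from strong convexity of $\nu D + i_{\Delta_p}$ via conjugacy/Danskin, and the chain rule for \eqref{eq:nablafnu}. Your observation that the constant $\nu/2$ tacitly presupposes the normalization $0 \le D \le 1/2$ on $\Delta_p$ (the general bound being $\nu \max_{\Delta_p} D$) is a fair and slightly more careful reading than the paper's statement.
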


% The smoothness of $f_\mu$, contrary to Nesterov's initial smoothing, is lower-bounded and only local. This stems from the fact that global smoothness is rarely achievable when one deals with a composition of smooth functions. Moreover, parameter $\mu$, which directly impacts on the smoothness of the approximation needs a practical tuning that usually depends on the dataset considered.

In our quest for simple and implementable expressions, we study in the rest of this section the case of separable strongly functions of the form:
\begin{equation}\label{eq:D}
D(q)= \sum_{i=1}^n d(q_i) \qquad \text{given a strongly convex function $d\colon [0,1]\rightarrow\RR$}.
\end{equation}
In Corollary\;\ref{cor:gradSnu}, we provide a general scheme to compute the gradient with explicit expressions in Examples\;\ref{ex:l2}\;and\;\ref{ex:KL} for special choices of $d$.
Finally we discuss the role of the smoothing parameter $\nu$ in a numerical illustration.

We start with a lemma gathering the nice duality properties of \eqref{eq:primal}.
%, that we formalize in the next lemma. %Special one-dimensional function
%$i_{\left[0, \frac{1}{n(1-p)}\right]}$ the indicator function of the segment $\left[0, \frac{1}{n(1-p)}\right]$ and
A one-dimensional convex function plays a special role: it is
the convex conjugate of the sum of $\nu d$ and the indicator of the segment $\left[0, 1/n(1-p)\right]$
%will come into play:
\begin{equation}\label{eq:gnu}
\gnu(s) = \left(\nu d + i_{\left[0, \frac{1}{n(1-p)}\right]}\right)^*(s)= \max_{0\leq t \leq \frac{1}{n(1-p)}} \left\{ s\, t -\nu \, d(t) \right\}\,.
\end{equation}
Since $d$ is strongly convex, standard (one-dimensional) convex analysis gives (see \eg\cite[Prop.~I.6.2.2]{hull}) that $\gnu$ is continuously differentiable with derivative
$\gnu'(s)$ being the (unique) $t$ achieving the above max. Simple calculus yields
\begin{equation}\label{eq:qnu}
\gnu'(s) %= \qnu(s)
= \left\{
    \begin{array}{cll}
        0 &&\mbox{ if } s\leq \nu\; d'_+(0)\\[1ex]
        \frac{1}{n(1-p)}  &&\mbox{ if } s \geq \nu\; d'_-(1/(n(1-p)))  \\[1ex]
        %t \text{~s.\;t.~} s \in \nu\,\partial d(t)&& \text{ otherwise}\\
        (d^*)'\left(\frac{s}{\nu}\right)&& \text{ otherwise.}\\
    \end{array}
    \right.
\end{equation}
where $d'_+(0)\in \left[ -\infty, +\infty \right)$ and $d'_-(1/(n(1-p)))\in \left[ -\infty, +\infty \right)$ are respectively the right-derivative of $d$ at $0$ and the left-derivative of $d$ at $1/(n(1-p))$. Note finally that ${\gnu}'$ is a non-decreasing function.

\begin{lem}[Duality]\label{lem:dual}
The dual problem of the convex problem~\eqref{eq:primal} (with a separable $D$ as in \eqref{eq:D}) can be expressed as the (smooth convex) one-dimensional problem:
\begin{equation}\label{eq:dual}
\min_{\eta} ~~ \theta(\eta) = \eta + \sum_{i=1}^n \gnui(\X_i-\eta).
\end{equation}
Moreover, there is no duality gap between \eqref{eq:primal} and \eqref{eq:dual}. There exists a primal-dual solution $(\qnu^\star, \etas)$ and the unique primal solution can be written $\qnu^\star = (\gnu'(u_i-\eta^{\star}))_{i=1,\ldots,n}$
with the help of \eqref{eq:qnu}.
\end{lem}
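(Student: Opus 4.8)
The plan is to derive the dual problem~\eqref{eq:dual} by Lagrangian duality applied to the constrained concave maximization~\eqref{eq:primal}, exploiting its separable structure. First I would write~\eqref{eq:primal} explicitly as
\[
\Spnu(\X) = \max_{q} \left\{ \sum_{i=1}^n \left( \X_i q_i - \nu d(q_i) \right) \,:\, \sum_{i=1}^n q_i = 1,\ 0 \le q_i \le \tfrac{1}{n(1-p)} \right\},
\]
and dualize only the affine coupling constraint $\sum_i q_i = 1$, keeping the box constraints $0 \le q_i \le 1/(n(1-p))$ inside the inner maximization. Introducing a multiplier $\eta \in \RR$ for $\sum_i q_i = 1$, the Lagrangian is $\sum_i (\X_i q_i - \nu d(q_i)) + \eta(1 - \sum_i q_i)$, so the dual function is $\eta + \sum_i \max_{0 \le q_i \le 1/(n(1-p))} \{ (\X_i - \eta) q_i - \nu d(q_i) \} = \eta + \sum_i \gnui(\X_i - \eta)$ by the very definition~\eqref{eq:gnu} of $\gnu$. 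This gives $\theta(\eta)$ and hence the dual problem~\eqref{eq:dual}; each $\gnui$ is convex as a conjugate and differentiable as already noted after~\eqref{eq:gnu}, so $\theta$ is a smooth convex one-dimensional function.

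Next I would establish zero duality gap and existence of solutions. The feasible set of~\eqref{eq:primal} is nonempty (the uniform vector $q_i = 1/n$ lies in $\Delta_p$ since $1/n \le 1/(n(1-p))$), compact, and convex; the objective is continuous and strictly concave in $q$ (strong concavity of $-\nu D$), so a unique primal optimum $\qnu^\star$ exists. Since the only constraint being dualized is the single linear equality and a Slater-type / polyhedral constraint qualification holds (all constraints are affine), strong duality for convex programs with affine constraints applies and there is no duality gap; moreover a dual optimum $\etas$ is attained — this can also be seen directly from coercivity of $\theta$, which follows because $\gnu'$ saturates at $0$ and at $1/(n(1-p))$ so that $\theta'(\eta) = 1 - \sum_i \gnu'(\X_i - \eta) \to 1 - n/(n(1-p)) = 1 - 1/(1-p) < 0$ as $\eta \to -\infty$ and $\to 1 > 0$ as $\eta \to +\infty$.

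Finally I would recover the primal solution from the dual one via the first-order optimality (KKT) conditions. Given an optimal $\etas$, the inner maximization decouples across $i$, and the maximizer of $q_i \mapsto (\X_i - \etas) q_i - \nu d(q_i)$ over $[0, 1/(n(1-p))]$ is exactly $\gnu'(\X_i - \etas)$ — this is the standard identification of the derivative of a one-dimensional conjugate with its argmax, already recorded after~\eqref{eq:gnu} and made explicit in~\eqref{eq:qnu}. Complementary slackness / the dual optimality condition $\theta'(\etas) = 0$ is precisely $\sum_i \gnu'(\X_i - \etas) = 1$, which certifies that $\qnu^\star := (\gnu'(\X_i - \etas))_i$ is primal feasible; strong duality then forces it to be the (unique, by strong convexity) primal optimum. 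I expect the main obstacle to be purely expository rather than mathematical: stating the constraint qualification and the conjugacy/argmax identification at the right level of rigor for a one-dimensional piecewise-defined $\gnu$ (handling the saturated branches in~\eqref{eq:qnu} where $d'_\pm$ may be $-\infty$ or $+\infty$), and making sure the coercivity argument for attainment of $\etas$ is stated cleanly; none of this is deep, but it requires care with the boundary cases.
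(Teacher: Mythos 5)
Your proposal is correct and follows essentially the same route as the paper: dualize only the coupling constraint $\sum_i q_i = 1$ to obtain $\theta(\eta)=\eta+\sum_i \gnu(\X_i-\eta)$, use the limits of $\theta'$ at $\pm\infty$ to get dual attainment, get primal existence/uniqueness from compactness and strong convexity, and identify $\qnu^\star=(\gnu'(\X_i-\etas))_i$ via the per-coordinate argmax and $\theta'(\etas)=0$. The only (minor) difference is that you invoke an off-the-shelf constraint-qualification/strong-duality theorem for the no-gap claim, whereas the paper deliberately avoids abstract duality results and deduces no gap directly from weak duality once the feasible candidate $(\gnu'(\X_i-\etas))_i$ is seen to attain the dual value $\theta(\etas)$ -- an argument whose ingredients you already have in your final paragraph.
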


\begin{proof}
%Note first that the strong convexity of the functions $d$ implies $\qnu$ well-defined.
This lemma could be proved by applying a sequence of results from abstract Lagrangian duality\;\cite[Chap.\,XII]{hull}. Instead, we provide a simple proof from the direct calculus developed so far.
%Along these explicit developments, we obtain explicit expressions useful for forthcoming
Consider the dualization of the constraint $\sum_{{i=1}}^n q_i - 1 = 0$
in\;$\Delta_p$.
%Let us give explicit expressions of the object at play.
For a primal variable $q\in B_p = \left[0, \frac{1}{n(1-p)}\right]^n$
and a dual variable $\eta\in \RR$, we write the Lagrangian
\[
L(q,\eta) = \sum_{{i=1}}^n q_i \X_i - \nu d_i(q_i) -  \eta \Big(\sum_{{i=1}}^n q_i-1\Big) = \eta + \sum_{{i=1}}^n q_i (\X_i-\eta) - \nu d_i(q_i) \,,
\]
and the associated dual function
\[
\theta(\eta) = \max_{q\in B_p} L(q,\eta)
= \eta + \sum_{{i=1}}^n \max_{0\leq q_i \leq \frac{1}{n(1-p)}} \left\{ q_i (\X_i-\eta) - \nu\, d_i(q_i) \right\}\,,
\]
which gives the expression of the dual function \eqref{eq:dual} from \eqref{eq:gnu}.
% Note for later that we have, by construction, the so-called weak duality inequality
We have the so-called weak duality inequality by construction:
\begin{equation}\label{eq:weak}
\theta(\eta)  \geq L(q,\eta) = \sum_{{i=1}}^n q_i \X_i - \nu d_i(q_i)\,,
\quad\text{for all $\eta$ and all feasible $q\in \Delta_p$.}
\end{equation}

Now recall that $\gnu$ in \eqref{eq:gnu} is differentiable and so is the dual function with
\begin{equation}\label{eq:thetaprime}
\theta'(\eta) = 1 - \sum_{{i=1}}^n \gnu'(\X_i-\eta) \,.
\end{equation}
% which has several consequences.
% First, from the explicit expression of $\gnu'$ in\;\eqref{eq:qnu}, we get that the non-differentiability points of $\theta'$
% are exactly in the set $N$. Second,
The above expression also shows that
\[
\lim_{\eta \rightarrow +\infty}\theta'(\eta) = 1\qquad\text{and}\qquad\lim_{\eta \rightarrow -\infty}\theta'(\eta) = 1 - \sum_{{i=1}}^n \frac{1}{n(1-p)}  = \frac{-p}{1-p}.
\]
By continuity of $\gnu'$ and $\theta'$, this implies that there exists $\etas$ such that $\theta'(\etas)=0$, i.e., there exists\;a dual solution $\etas$. %We have moreover uniqueness of $\etas$ by
On the primal side, the compactness of $B_p$ and strong convexity of $d$ gives existence and uniqueness of the primal solution, denoted $\qnu^\star$.
%By general properties of lagrangian duality (see \eg\cite[Ch.XII]{hull}), we know in advance that there is strong duality for this convex problem: the non-empty relative interior of $\Delta_P$ gives also existence of a dual optimal solution;
% It also comes directly from
%  which connects primal and dual solutions: ${\qnu}^{\star}=\qnu(\eta^{\star})$, or, in words a maximum of the Lagrangian at the dual solution is a primal solution.
Next, we have a simple consequence of \eqref{eq:thetaprime}: the vector $(\gnu'(\X_i-\etas))_{i=1,\ldots,n}$, which lies in $B_p$ by construction, is primal feasible. From \eqref{eq:weak} and uniqueness of the primal solution, this implies that $\qnu^\star = (\gnu'(\X_i-\etas))_{i=1,\ldots,n}$ and that there is no duality gap.
\end{proof}

From Lemma\;\ref{lem:dual}, we get an almost explicit expressions of values and gradients of the smooth approximation $\fnu$.

\begin{cor}[Oracle for smooth approximation]\label{cor:gradSnu}
Consider $\fnu$ defined by\;\eqref{eq:fnu} with $L$ differentiable. With $\etas$ an optimal solution of\;\eqref{eq:dual} with $\X_i = \Loss_i(\x)$, %we have
\vspace{-2ex}
\begin{align}
\fnu(\x)&= \etas + \sum_{i=1}^n \gnui(\Loss_i(\x)-\etas) \,,\\
\nabla \fnu(\x)&= %\jaco \Loss(\x)^*(\gnu'(\Loss_i(\x) - \eta^{\star}))_{i=1,\ldots,n} \,
\sum_{i=1}^n \gnu'(\Loss_i(\x)-\etas)\,\nabla L_i(\x)\,
\end{align}
where $\gnu$ and $\gnu'$ are given by \eqref{eq:gnu} and \eqref{eq:qnu}.
\end{cor}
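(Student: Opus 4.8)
The plan is to derive both formulae directly from Lemma~\ref{lem:dual} and Proposition~\ref{thm:smooth_approx}, chaining the one-dimensional duality with the chain rule~\eqref{eq:nablafnu}. First I would invoke Lemma~\ref{lem:dual} with $\X_i = \Loss_i(\x)$: it gives an optimal dual point $\etas$ (a zero of $\theta'$), asserts no duality gap, and identifies the unique primal maximizer of~\eqref{eq:primal} as $\qnu^\star = (\gnu'(\Loss_i(\x)-\etas))_{i=1,\ldots,n}$. By the definition~\eqref{eq:primal} of $\Spnu$ and the no-gap statement, $\Spnu(\Loss(\x))$ equals the common primal-dual value, which by~\eqref{eq:dual} is $\theta(\etas) = \etas + \sum_{i=1}^n \gnui(\Loss_i(\x)-\etas)$. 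Substituting $\fnu(\x)=\Spnu(\Loss(\x))$ from~\eqref{eq:fnu} yields the first displayed equation.

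For the gradient, Proposition~\ref{thm:smooth_approx} already tells us that $\fnu$ is differentiable (since $\Loss$ is) with $\nabla\fnu(\x) = \jaco\Loss(\x)^* \nabla\Spnu(\Loss(\x))$, and that $\nabla\Spnu(\Loss(\x))$ is the unique argmax of~\eqref{eq:primal}, i.e.\ exactly $\qnu^\star$. Plugging in the componentwise expression $\qnu^\star_i = \gnu'(\Loss_i(\x)-\etas)$ from Lemma~\ref{lem:dual} and writing $\jaco\Loss(\x)^* q = \sum_{i=1}^n q_i \nabla\Loss_i(\x)$ gives the second displayed equation. The references to~\eqref{eq:gnu} and~\eqref{eq:qnu} for the closed forms of $\gnu$ and $\gnu'$ are then just a pointer, since those were established right before the lemma.

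The only genuine subtlety—more a bookkeeping point than an obstacle—is making sure the chosen $\etas$ in the corollary is the \emph{same} one used to express $\qnu^\star$; this is immediate because Lemma~\ref{lem:dual} ties the primal solution to \emph{any} dual optimal $\etas$ (the primal solution is unique, so the formula $\qnu^\star=(\gnu'(\Loss_i(\x)-\etas))_i$ must hold for every optimal $\etas$, even if $\etas$ itself is not unique). A second minor point is the measurability/well-definedness of $\etas$ as a function of $\x$, but since the corollary only asserts the pointwise identities for a fixed $\x$ with $\etas$ an optimal dual solution at that $\x$, nothing further is needed. Thus the proof is essentially a two-line assembly of the preceding results, and I would write it as such.

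\begin{proof}
Fix $\x\in\RR^d$ and set $\X_i=\Loss_i(\x)$. By Lemma~\ref{lem:dual}, the dual problem~\eqref{eq:dual} admits an optimal solution $\etas$, there is no duality gap between~\eqref{eq:primal} and~\eqref{eq:dual}, and the unique maximizer of~\eqref{eq:primal} is $\qnu^\star=(\gnu'(\Loss_i(\x)-\etas))_{i=1,\ldots,n}$. The absence of duality gap gives
\[
\Spnu(\Loss(\x)) = \theta(\etas) = \etas + \sum_{i=1}^n \gnui(\Loss_i(\x)-\etas),
\]
which is the claimed expression for $\fnu(\x)=\Spnu(\Loss(\x))$ by~\eqref{eq:fnu}. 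For the gradient, Proposition~\ref{thm:smooth_approx} ensures $\fnu$ is differentiable with $\nabla\fnu(\x)=\jaco\Loss(\x)^*\nabla\Spnu(\Loss(\x))$ and that $\nabla\Spnu(\Loss(\x))$ equals the argmax of~\eqref{eq:primal}, i.e.\ $\qnu^\star$. Hence
\[
\nabla\fnu(\x) = \jaco\Loss(\x)^*\qnu^\star = \sum_{i=1}^n \qnu^\star_i\,\nabla\Loss_i(\x) = \sum_{i=1}^n \gnu'(\Loss_i(\x)-\etas)\,\nabla\Loss_i(\x).
\]
The closed forms of $\gnu$ and $\gnu'$ are given by~\eqref{eq:gnu} and~\eqref{eq:qnu}.
\end{proof}
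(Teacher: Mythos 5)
Your proof is correct and follows essentially the same route as the paper's: invoke Lemma~\ref{lem:dual} (no duality gap, primal solution $(\gnu'(\Loss_i(\x)-\etas))_i$) for the value, then Proposition~\ref{thm:smooth_approx} and the chain rule~\eqref{eq:nablafnu} for the gradient. The extra remark about the formula holding for any dual optimal $\etas$ is a fine clarification but not needed beyond what the paper states.
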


\begin{proof}
The no-gap result of Lemma\;\ref{lem:dual} gives that $\Spnu(\X)$ is equal to the optimal value of \eqref{eq:dual}. This gives directly the above expression of $\fnu(\x)= \Spnu(\Loss(\w))$ with $\etas$ an optimal solution of \eqref{eq:dual} with $\X_i = \Loss_i(\x)$. Regarding the expression of the gradient,
Proposition\;\ref{thm:smooth_approx} states that $\nabla \Spnu(\X)$ is the optimal solution of \eqref{eq:primal}, and Lemma\;\ref{lem:dual} expresses it as $(\gnu'(\X_i - \eta^{\star}))_{i=1,\ldots,n}$. We then get the expression of $\nabla \fnu(\x)$ from \eqref{eq:nablafnu}.
 \end{proof}

Thus the computation of the first-order oracle of %the smooth approximation
$\fnu$ boils down to solving the one-dimensional convex problem \eqref{eq:dual} with $\X_i = L_i(\x)$.
This easy task can be solved in general by bisection or higher-order schemes. Here Lemma\;\ref{lem:dual} allows us to make an additional simplification with an initial interval tightening. We can indeed shrink the segment where to find $\etas$ to two consecutive points in
\begin{equation}\label{eq:N}
    N = \left\{ u_i - \nu\, d'_+(0) , u_i - \nu\,d'_-\Big(\frac{1}{(n(1-p)}\Big)~~\text{$i=1,\ldots,n$}\right\}
\end{equation}
which is a set of special points regarding the structure of the dual function (recall\;\eqref{eq:qnu} and\;\eqref{eq:dual}).
% non-differentiability of $\theta'$, that we know explicitly from\;\eqref{eq:N}.
Denoting $\etaa$ and $\etab$, defined respectively as the largest point in $N$ such that $\theta'(\etaa)\leq0$ and the smallest point in $N$ such that $\theta'(\etab)\geq0$, we get\;$\eta^\star$ by testing three cases:
\begin{itemize}
    \item if $\theta'(\etaa) = 0$, take $\etas = \etaa$\;; if $\theta'(\etab) = 0$, take $\etas = \etab$\;;
    \item otherwise, compute $\etas$ in the small interval $[\etaa,\etab]$.
\end{itemize}
The initial interval tightening thus boils down to having sorted points in the set\;$N$, which is obtained directly from sorting the given data.
% In other words, the cost of the infimal smoothing of superquantile-based functions %is the same for any $d$ and
% is almost the same that for the superquantile:
% \[
% \bigO(n\log(n)) \text{ for $\fnu$ ~~~vs.~~~} \bigO(n) \text{ for $f$}.
% \]

Finally we emphasize that we can sometimes go one step further ahead and obtain explicit expressions of $\etas$ and thus, readily implementable expressions of $\nabla \fnu(\x)$. In the next two examples, we illustrate this for two cases of interest,
when we smooth the superquantile by a divergence to the uniform probability (which is at the center of $\Delta_p$; recall Figure\;\ref{fig:support}). In particular the smoothing detailed in the forthcoming Example\;\ref{ex:l2} was used in the numerical illustrations of Examples\;\ref{ex:safe_learning},\;\ref{ex:fedreg},
and\;\ref{ex:fairreg} (where the resulting smoothed superquantile optimization problems were solved by L-BFGS).

%We suggest here to

\begin{example}[Euclidean smoothing]\label{ex:l2}
We suggest to smooth the superquantile with the Euclidean distance to the uniform distribution
\begin{equation}\label{eq:Deuclidian}
D(q) = \frac{1}{2} \|q - \barq\|^2
\quad\text{with}\quad
\barq = \left(\frac{1}{n},\ldots,\frac{1}{n}\right).
\end{equation}
This corresponds to \eqref{eq:D} with
\[
d(t) = \frac{1}{2}\left(t - \frac{1}{n}\right)^2.
\]
In this case, elementary calculus gives
\[
d_-'(0)=-\frac{1}{n},\quad
d_{+}'\left(\frac{1}{n(1-p)}\right)=\frac{p}{n(1-p)},\quad\text{and}\quad
(d^*)'\left(\frac{t}{\nu}\right) = \frac{t}{\nu} + \frac{1}{n}
\]
so that we get from \eqref{eq:qnu} the following expression% for $\qnui(\X_i-\eta)$ %and $\gnui$
\begin{equation}\label{eq:sol_q_i}
\begin{split}
  q_i= \gnu'(\X_i-\eta) &=  \left\{
    \begin{array}{cll}
        0 &&\mbox{ if } \eta \geq \X_i+\frac{\nu}{n} \\
        \frac{1}{n(1-p)}  &&\mbox{ if } \eta \leq \X_i - \frac{\nu}{n}\frac{p}{1-p}  \\
        \frac{\X_i - \eta}{\nu} + \frac{1}{n}&& \text{ otherwise,}\\ %\mbox{ if } u_i \geq \eta \,\X_i - \frac{\nu}{n}\frac{p}{1-p} \\
    \end{array}
    \right.
\end{split}
\end{equation}
for the entries of the solution of \eqref{eq:primal}.
We also have that $\theta'$ is piecewise linear in this case and that
\[
N = \left\{ x_i + \frac{\nu}{n} , x_i - \frac{\nu}{n}\frac{p}{1-p}~~\text{$i=1,\ldots,n$}\right\}.
\]
Therefore from $\etaa$ and $\etab$ in $N$, finding $\etas$ in the interval $[\etaa,\etab]$ simply reduces to interpolating linearly as
\[
\etas = \etaa - \frac{\theta'(\etaa)(\etab-\etaa)}{\theta'(\etab) - \theta'(\etaa)}.
\]
%$\X_i+\frac{\nu}{n}$ and $\X_i - \frac{\nu}{n}\frac{p}{1-p}$ for all $i$.
We can apply Corollary\;\ref{cor:gradSnu} to get an efficiently implemented expression of the gradient. Note that the obtained expression of $\nabla \fnu(\x)$ involves only the gradients $\nabla \Loss_i(\x)$ for largest values of $\Loss_i(\x)$ (comparable to the expression of $\partial \Loss(\x)$ in Proposition\;\ref{thm:sub}).
\qed
\end{example}

\begin{example}[KL smoothing]\label{ex:KL}
We use here the Kullback-Leibler divergence to the uniform probability
\[%begin{equation}%\label{eq:div}
d(q) = \sum_{i=1}^n q_i \log(q_i/\barq_i)
\quad\text{with}\quad
\barq = \left(\frac{1}{n},\ldots,\frac{1}{n}\right).
\]%end{equation}
which consists in taking $d(t) = t\log(t)$ in \eqref{eq:D}.
Elementary calculus then gives
\[
d_+'(0)=-\infty,\quad
d_{-}'\!\!\left(\frac{1}{n(1-p)}\right)= 1 - \log(n(1-p)),
~~\text{and}~~
(d^*)'\!\!\left(\frac{t}{\nu}\right)\! = \exp\!\left(\frac{t}{\nu}-1\right)
\]
which in turn yields
\[
\begin{split}
  \gnu'(\X_i-\eta) &=  \left\{
    \begin{array}{cll}
        \frac{1}{n(1-p)} &&\mbox{ if } \eta \leq \X_i+  \nu \left(\log(n(1-p)) -1\right) \\
        \exp{(\frac{\X_i - \eta}{\nu} - 1 )}&& \text{ otherwise}\\ %\mbox{ if } u_i \geq \eta \,\X_i - \frac{\nu}{n}\frac{p}{1-p}
    \end{array}
    \right. \\
    N &= \left\{ \X_i+ \nu\left(\log(n(1-p)) -1\right)~~\text{$i=1,\ldots,n$}\right\}.
\end{split}
\]
On the interval $[\etaa,\etab]$, we have that
\[
\theta'(\eta) = 1 - \sum_{i\in I} \frac{1}{n(1-p)} - \sum_{i\notin I}\exp{\left(\frac{\X_i - \eta}{\nu} - 1 \right)}
\]
with $I= \{i , \X_i+  \nu \left(\log(n(1-p)) -1\right) \leq \etaa\}$ the set of indices of points in $N$ smaller than $\etaa$. This yields
\[
\eta^\star = \nu \log\left(\frac{\sum_{i \notin I} \exp(\X_i/\nu - 1)}{1 - {|I|}/\big({n(1-p)}\big)} \right).
\]
We can then apply Corollary\;\ref{cor:gradSnu} to get the smoothed gradient.\qed
\end{example}

\begin{figure}[t!]
\centering
\vspace*{-1ex}
  \centerline{\includegraphics[width=14cm]{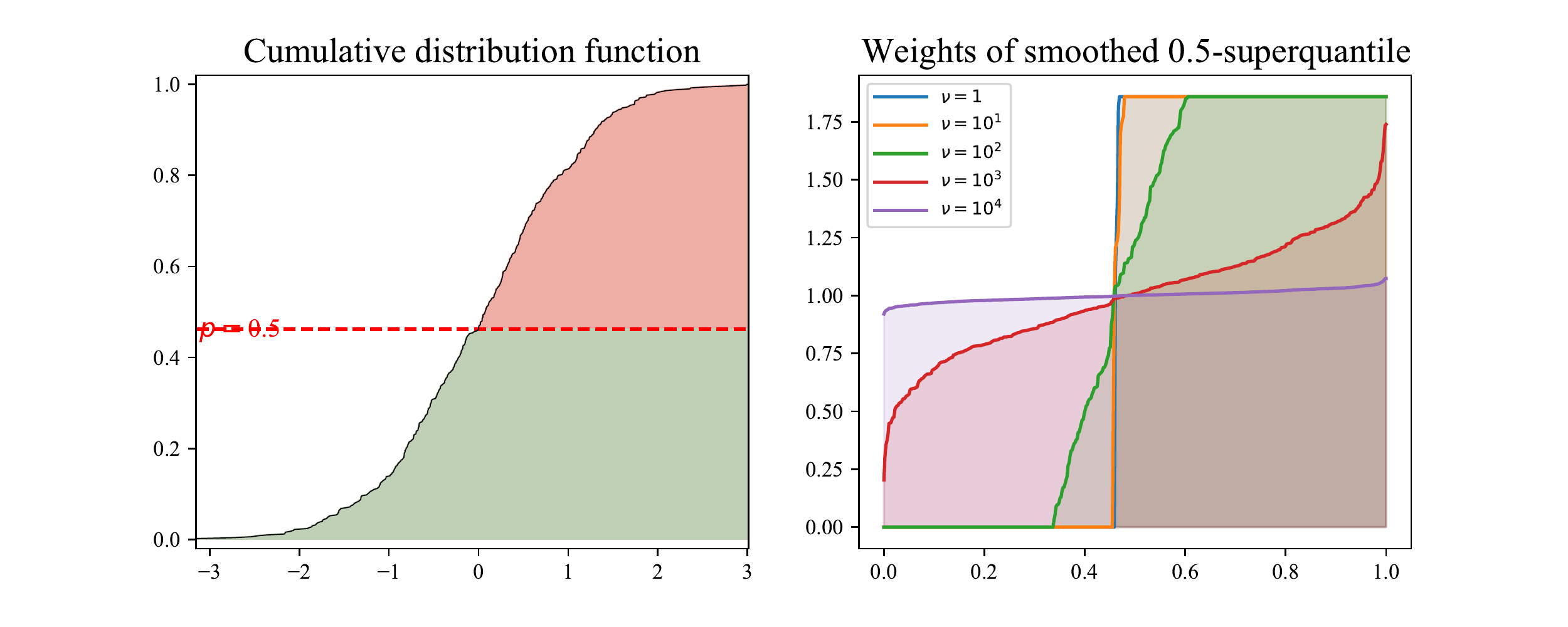}}
\vspace{-1ex}
\caption{Impact of the smoothing parameter $\nu$ on the weights assigned to the data points. \textbf{Left:} empirical cumulative distribution of $n=500$ points sampled from a standard Gaussian distribution. \textbf{Right:} distribution of weights, \ie the optimal solution of~\eqref{eq:primal} for $p=0.5$, with respect to sorted data points (\ie value at abscissa $t$ is the weight attached to the $t$-quantile).  Different colours correspond to
different values of $\nu$.}
\vspace*{-1ex}
\label{fig:sensit}
\end{figure}

We conclude this section on the infimal-smoothing of the superquantile with two remarks illustrating the impact of the smoothing parameter $\nu$.

\vspace*{-2ex}

\begin{remark}[Impact of the smoothing parameter on the weights]
We illustrate the impact of the smoothing parameter $\nu$ on the relative weights given to the data. We consider the Euclidean smoothing of Example\;\ref{ex:l2} with $p=0.5$. We sample $n=500$ points from a Gaussian distribution and compute the distribution of weights $q_i$ of \eqref{eq:sol_q_i}, solutions to smoothed problem~\eqref{eq:primal} for different values of the smoohting parameter $\nu$.
The right-hand side of Figure\;\ref{fig:sensit} displays the impact of $\nu$ of the obtained weights. In particular, we note that as $\nu$ grows, the distribution $q_i$ tends to spread uniformly over all data-points, so that the smoothed superquantile acts like the expectation. In contrast, when $\nu$ is close to 0, the distribution approximates the uniform distribution over the interval $[p,1]$,
 so that the smoothed superquantile acts like the superquantile. This approximation is further discussed in the next remark.\qed
\end{remark}

\begin{remark}[Impact of the smoothing parameter on the approximation]
We illustrate here the impact of the smoothing parameter $\nu$ on the approximation of the superquantile by its smoothed variant (Proposition\;\ref{thm:smooth_approx}). To do so, we fix a vector\;$\bar \x$ and we observe the values of smoothed approximations of a superquantile-based function for
different values of $\nu$.
More precisely, we consider the logistic regression problem used in Appendix\;\ref{sec:a:Numerical Illustration};
%of the form \eqref{eq:ESM} (with $p=0.99$) on a classical dataset (Australian Credit dataset).
we use the quadratic smoothing of Example\;\ref{ex:l2} with $\nu=0.1$;
and we solve the problem by L-BFGS to get the reference point $\bar \x$.
We compute the following values at $\bar\x$:
\begin{itemize}
    \item the underlying superquantile-based objective \eqref{eq:ESM} which corresponds to the case $\nu=0$;
    \smallskip
    \item the smoothed approximations (which corresponds to \eqref{eq:ESM} with $\Spnu$ replacing $\Sp$) for a sequence of $\nu$ evenly spread on a log scale;
    \smallskip
    \item the usual empirical risk minimization objective \eqref{eq:ERM}, which corresponds to the case $\nu=+\infty$. Indeed, in this regime $\nu\to+\infty$, the impact of the quadratic penalization term $(q - \bar q)$ %in the definition of the smooth superquantile
    increases %when $\nu$ grows
    so that the solution of \eqref{eq:primal} eventually becomes the uniform distribution $\bar q$, in which case $\Spnu$ coincides with the expectation.
\end{itemize}

The observations from Figure\;\ref{fig:nu} are as expected. For small values of $\nu$, the difference between the superquantile-based objective and its smooth approximations vanishes. On the other hand, for large values of $\nu$, the smoothed superquantile loss tends to the average loss and does not approximate the nonsmooth superquantile loss well. %$$, even for large safety values p.

%On the other hand,
A key benefit of smoothing the superquantile is to leverage efficient smooth optimization algorithms, such as L-BFGS, for superquantile learning.
When $\nu$ is too small, the problem is almost non-smooth, which leads to numerical issues with convergence (on this instance, L-BGFS fails to converge when $\nu$ too small or when used with the nonsmooth oracle of Proposition\;\ref{thm:sub} due to a line search failure).
When $\nu$ is too large, the smoothed superquantile gets close to the
expectation and the interest of using a superquantile approach disappears.
% the smoother the objective is~\cite[Theorem 4.1]{beck2012smoothing},
% making it easier for optimization.
This %example
illustrates the interest of having a moderate $\nu$ for superquantile learning,
where the smoothed objective is an reasonable approximation of the nonsmooth superquantile, while still being
smooth enough
to leverage fast optimization algorithms. \qed
\end{remark}

\begin{figure}[ht!]
\centering
\vspace*{-1ex}
  \centerline{\includegraphics[width=8cm]{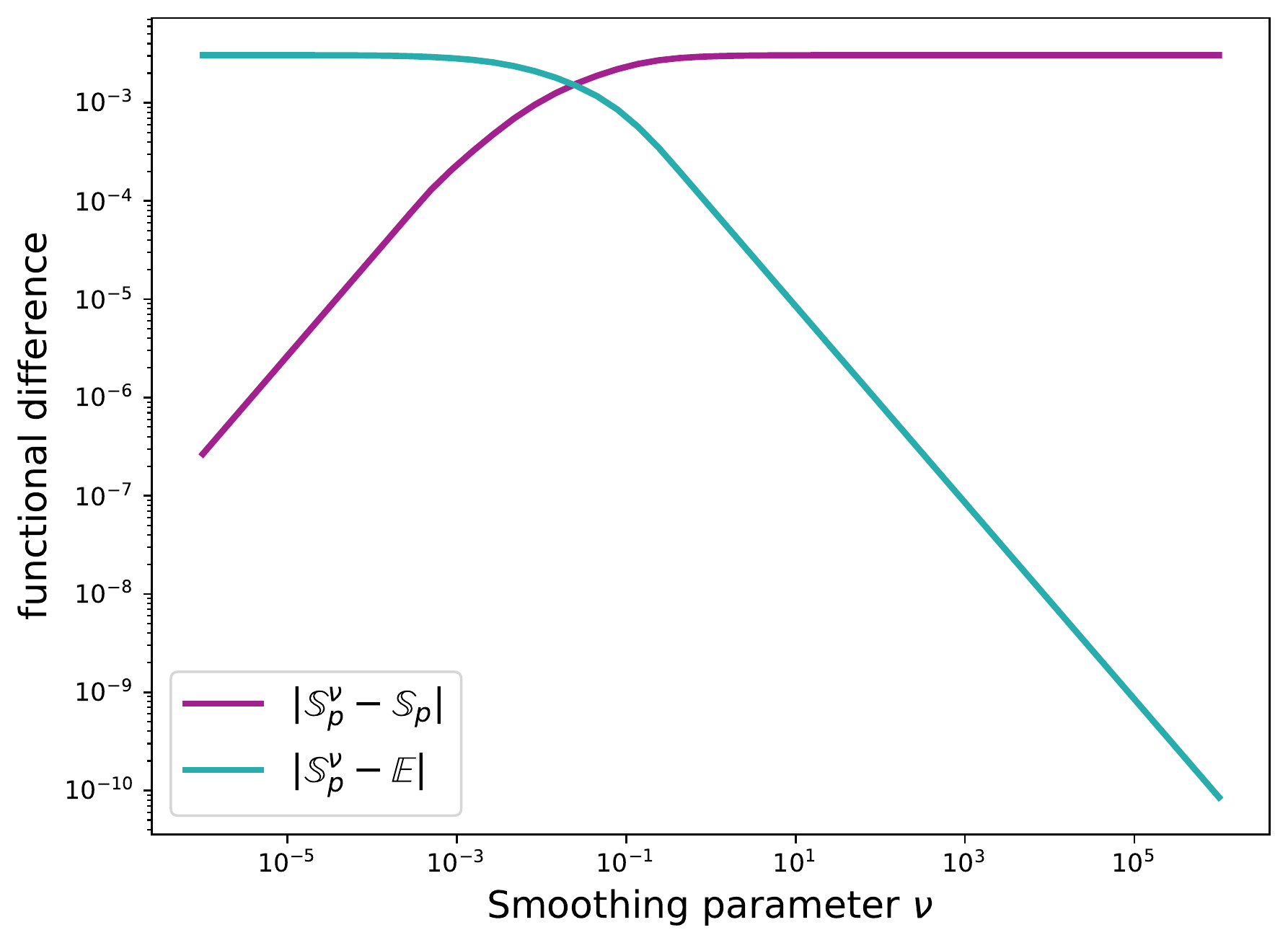}}
\vspace{-1ex}
\caption{Impact of the smoothing parameter $\nu$ when
solving a superquantile logistic regression on a classical dataset (Australian Credit dataset).}
\vspace*{-2ex}
\label{fig:nu}
\end{figure}

% ===============================================================
% ===============================================================
\vspace*{-1ex}
\subsection{Comparison to Other Smoothing Schemes}
\label{sec:smooth-comp}
% ===============================================================
% ===============================================================

We compare the proposed infimal convolution smoothing of the superquantile \eqref{eq:primal} to other possible smoothing schemes. Classical smoothing techniques are based either on convolution or infimal convolution. For superquantile, one could either smooth the dual representation\;\eqref{eq:dual_discrete} or the variational representation\;\eqref{eq:min_discrete}. Together, this yields four natural ways to smooth the superquantile.

We first formalize the equivalence between the two infimal convolution smoothings: indeed, smoothing the dual representation considered in the preceding section corresponds to a smoothing of $\max\{\cdot,0\}$ in the variational formulation.
% We note that smoothing the $max\{\cdot, 0\}$ alone is not enough to

\begin{cor}[Equivalence of smoothings with infimal convolution]\label{cor:corres}
With the notation of Section~\ref{sec:smooth}, the infimal convolution smoothing of $\Sp$ with a separable strongly convex function\;\eqref{eq:D} is equivalent to the infimal convolution smoothing of the positive part $\max\{\cdot,0\}$\;as
\begin{equation} \label{eq:smooth-iconv}
\mnui (\eta) = \!\max_{0\leq t \leq 1} \left\{\eta\,t -\nu \, \tilde d(t) \right\} \quad \text{ with ~$\tilde d(t)= n(1-p) d\left(\frac{t}{n(1-p)}\right)$}.
\end{equation}
More precisely, we have the following equality (to be compared with \eqref{eq:min_discrete})
\[
\Spnu(\X) = \min_{\eta} ~ \left\{\eta + \frac{1}{n(1-p)}\sum_{i=1}^n \mnui(\X_i-\eta)\right\}.
\]
\end{cor}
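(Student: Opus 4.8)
The plan is to start from Lemma~\ref{lem:dual}, which already gives the dual form
\[
\Spnu(\X) = \min_{\eta} \left\{\eta + \sum_{i=1}^n \gnui(\X_i-\eta)\right\},
\]
with $\gnui(s) = \max_{0\le t \le 1/(n(1-p))} \{s\,t - \nu\, d(t)\}$. The whole statement then reduces to a one-dimensional change of variables: I want to rewrite each $\gnui(s)$ in the form $\tfrac{1}{n(1-p)}\mnui(s)$ for the function $\mnui$ defined in \eqref{eq:smooth-iconv}. So the core of the proof is purely the identity
\[
\gnui(s) = \frac{1}{n(1-p)}\, \mnui(s) \qquad \text{for all } s\in\RR,
\]
after which substitution into the Lemma~\ref{lem:dual} expression gives exactly the claimed formula.

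The key computation is the substitution $t = \tfrac{\tau}{n(1-p)}$ in the maximization defining $\gnui$. First I would write
\[
\gnui(s) = \max_{0\le t \le \frac{1}{n(1-p)}} \left\{ s\,t - \nu\, d(t)\right\}
         = \max_{0\le \tau \le 1} \left\{ s\,\frac{\tau}{n(1-p)} - \nu\, d\!\left(\frac{\tau}{n(1-p)}\right)\right\},
\]
since $t$ ranges over $[0, 1/(n(1-p))]$ exactly when $\tau = n(1-p)\,t$ ranges over $[0,1]$. Factoring out $\tfrac{1}{n(1-p)}$ from the bracketed expression, and recalling $\tilde d(\tau) = n(1-p)\, d(\tau/(n(1-p)))$, gives
\[
\gnui(s) = \frac{1}{n(1-p)} \max_{0\le \tau \le 1}\left\{ s\,\tau - \nu\, \tilde d(\tau)\right\} = \frac{1}{n(1-p)}\,\mnui(s),
\]
which is precisely the desired identity. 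I would also note in passing that $\tilde d$ is strongly convex on $[0,1]$ (it is a positive rescaling-and-reparametrization of the strongly convex $d$), so $\mnui$ is a genuine infimal-convolution-type smoothing of $\max\{\cdot,0\}$, consistent with the phrasing of the corollary; strictly speaking $\mnui$ is the conjugate of $\nu\tilde d + i_{[0,1]}$, which is the smoothed positive part in the sense of Nesterov-type smoothing, and the comparison with \eqref{eq:min_discrete} (where $d\equiv 0$ and $\gnui(s) = \tfrac{1}{n(1-p)}\max\{s,0\}$) is immediate.

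Finally I would assemble the pieces: plug $\gnui(s) = \tfrac{1}{n(1-p)}\mnui(s)$ into the dual expression of $\Spnu$ from Lemma~\ref{lem:dual} to obtain
\[
\Spnu(\X) = \min_{\eta}\left\{\eta + \frac{1}{n(1-p)}\sum_{i=1}^n \mnui(\X_i-\eta)\right\},
\]
which is the claimed equality. I do not expect any real obstacle here: the result is essentially a bookkeeping consequence of Lemma~\ref{lem:dual} together with a linear substitution, and the only point requiring a word of care is checking that the substitution $\tau = n(1-p)t$ maps the constraint set $[0,1/(n(1-p))]$ bijectively onto $[0,1]$ and that the rescaled penalty $\tilde d$ retains strong convexity so that the interpretation as a smoothing of $\max\{\cdot,0\}$ is legitimate.
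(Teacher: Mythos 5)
Your proof is correct and follows the same route as the paper: establish $\gnui(s) = \tfrac{1}{n(1-p)}\,\mnui(s)$ by the change of variable $\tau = n(1-p)\,t$ in \eqref{eq:gnu}, then substitute into the dual expression \eqref{eq:dual} and invoke the no-gap result of Lemma~\ref{lem:dual}. The paper's proof is just a terser statement of exactly these steps, so no further comparison is needed.
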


\begin{proof}
A direct change of variable in \eqref{eq:gnu} gives $\gnui(\X_i-\eta) =  \frac{1}{n(1-p)} \mnui(\X_i-\eta)$. The proof is direct from the expression of the dual problem \eqref{eq:dual} and the no-gap result stated in Lemma\;\ref{lem:dual}.
\end{proof}

Next, we show an equivalence between the
smoothing %of the $\max\{\cdot, 0\}$
by infimal convolution\;\eqref{eq:smooth-iconv}, and by convolution, as considered in \cite{DBLP:journals/coap/ChenM96,luna2016approximation}.
%with an appropriate choice of convolution kernel.
Suppose we are given a continuous probability density $\rho\colon\RR\to\RR_+$
such that $\int_{-\infty}^{\infty} |s|\rho(s)\dd s$ is finite).
% normalized so that
% $\int_{-\infty}^{\infty} \rho(s)\dd s = 1$,
The smoothing by convolution of the function $\max\{\cdot, 0\}$ with density $\rho$ and
smoothing parameter $\nu > 0$ is
defined by\footnote{Applied to $\max\{x, \cdot\}$, the general smoothing by convolution as defined in~\eqref{eq:smooth-conv} coincides with the double integral representation used in\;\cite{DBLP:journals/coap/ChenM96,luna2016approximation}.
Indeed, integrating\;\eqref{eq:d-smooth-conv} yields
%the double integral representation:
\[
\bar m_\nu(\eta) = \tfrac{1}{\nu} \int_{-\infty}^\eta
    \int_{-\infty}^{\eta'}  \rho\left(\tfrac{s}{\nu}\right) \dd s \, \dd \eta' \,.
\]
}
\begin{equation} \label{eq:smooth-conv}
    \bar m_\nu(\eta)
    = \frac{1}{\nu} \int_{-\infty}^\infty \max\{\eta-s, 0\} \rho\left(\tfrac{s}{\nu}\right) \dd s
    = \frac{1}{\nu} \int_{-\infty}^\eta (\eta - s) \rho\left(\tfrac{s}{\nu}\right) \dd s
    % = \frac{1}{\nu} \int_{-\infty}^\eta
    % \int_{-\infty}^{\eta'}  \rho\left(\tfrac{s}{\nu}\right) \dd s \dd \eta'
    \,.
\end{equation}
The function $\bar m_\nu$ is convex and smooth, with derivative
\begin{equation} \label{eq:d-smooth-conv}
    \bar m_\nu'(\eta) = \frac{1}{\nu} \int_{-\infty}^\eta
    \rho\left(\tfrac{s}{\nu}\right) \dd s\,.
\end{equation}
The next proposition, relating this smoothing to the previous one, involves $Q_t(\rho)$ the quantile function of a random variable with density $\rho$.

\begin{prop}[Equivalence of convolution/inf-convolution smoothings]\hfill \label{prop:smooth-comp}
    With the above notation, the convolution smoothing $\bar m_\nu$ of~\eqref{eq:smooth-conv}\;for $\nu = 1$
    %of $\max\{\cdot, 0\}$
    can be written as the infimal-convolution smoothing %with $\bar d$
    (to be compared with\;\eqref{eq:smooth-iconv})
    \begin{equation} \label{eq:conv:1}
        \bar m_1(\eta) = \max_{0 \le t \le 1} \left\{ \eta\, t - \bar d(t)  \right\}
        \quad \text{where} \quad
        \bar d(t) = t Q_t(\rho) - \bar m_1(Q_t(\rho)).
    \end{equation}
    %Further, $\bar d$ is strongly convex.
    Conversely, the infimal convolution smoothing $m_\nu$ of~\eqref{eq:smooth-iconv}\;for\;$\nu = 1$
    %of $\max\{\cdot, 0\}$
    can be written as the convolution smoothing %with $\tilde \rho$
    (to be compared with \eqref{eq:smooth-conv})
    \begin{equation} \label{eq:conv:2}
        m_1(\eta) = \lim_{s \to -\infty } m_1(s)  + \int_{-\infty}^\eta (\eta-s) \tilde \rho(s) \dd s
        \quad\text{where}\quad \tilde\rho(s)= m_1''(s) \text{ a.e}.
        % \tilde\rho(s) =
        % \begin{cases}
        %     m_1''(s)\, & \text{ if it exists}, \\
        %     \underset{\substack{s'\to s, \\  m''_1(s') \text{ exists}}}{\liminf} m''_1(s')
        %     \,, &\text{else} \,.
        % \end{cases}
    \end{equation}
    % Further, $\tilde \rho$ is a probability density, i.e., it is non-negative and $\int_{-\infty}^{\infty} \tilde\rho(s)\dd s = 1$.
\end{prop}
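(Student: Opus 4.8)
The plan is to establish both directions by reducing each smoothing to the other through the one-dimensional convex-analytic identity relating a convex function, its second derivative, and its Legendre conjugate. For the first direction, I start from the convolution smoothing $\bar m_1$ defined in \eqref{eq:smooth-conv}. Its derivative $\bar m_1'(\eta) = \int_{-\infty}^\eta \rho(s)\,\dd s$ is exactly the cumulative distribution function of the density $\rho$; since $\rho$ is a continuous probability density, $\bar m_1'$ is a continuous, nondecreasing bijection from $\RR$ onto $(0,1)$, with inverse $t \mapsto Q_t(\rho)$. This is precisely the monotonicity structure needed to invoke the standard fact (e.g.\ \cite[Prop.~I.6.2.2 or Cor.~X.1.4.4]{hull}) that $\bar m_1$ is smooth and convex, hence equals the biconjugate $(\bar m_1^*)^*$, and that its conjugate $\bar d := \bar m_1^*$ has domain $[0,1]$ (the closure of the range of $\bar m_1'$). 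Evaluating the conjugate via the Legendre transform, $\bar d(t) = \sup_\eta\{\eta t - \bar m_1(\eta)\}$, the supremum is attained where $\bar m_1'(\eta) = t$, i.e.\ at $\eta = Q_t(\rho)$, which yields $\bar d(t) = t\,Q_t(\rho) - \bar m_1(Q_t(\rho))$ and the representation \eqref{eq:conv:1}. One should check strong convexity / finiteness at the endpoints $t=0,1$ so that the $\max$ over $[0,1]$ is legitimate; this follows from the integrability assumption $\int|s|\rho(s)\,\dd s < \infty$.

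For the converse direction, I start from the infimal-convolution smoothing $m_1(\eta) = \max_{0 \le t \le 1}\{\eta t - \tilde d(t)\}$ of \eqref{eq:smooth-iconv} (with $\nu = 1$), which is a finite convex function on $\RR$, differentiable with $m_1'(\eta)$ the unique maximizer $t \in [0,1]$, and with $m_1'$ nondecreasing. Because $m_1'$ ranges in $[0,1]$ and $m_1$ is a smooth convex function whose second derivative $\tilde\rho := m_1''$ is defined almost everywhere and integrates to $\lim_{\eta\to+\infty} m_1'(\eta) - \lim_{\eta\to-\infty} m_1'(\eta) = 1 - 0 = 1$, the function $\tilde\rho$ is (a.e.) a probability density. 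Then I integrate twice: $m_1'(\eta) = \lim_{s\to-\infty} m_1'(s) + \int_{-\infty}^\eta \tilde\rho(s)\,\dd s = \int_{-\infty}^\eta \tilde\rho(s)\,\dd s$, and integrating once more (with an integration by parts to collapse the double integral, exactly as in the footnote to \eqref{eq:smooth-conv}) gives $m_1(\eta) = \lim_{s\to-\infty} m_1(s) + \int_{-\infty}^\eta (\eta - s)\tilde\rho(s)\,\dd s$, which is \eqref{eq:conv:2}.

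The main obstacle I anticipate is handling the boundary behavior carefully: the endpoints $t=0$ and $t=1$ of the interval $[0,1]$ correspond to $\eta \to \mp\infty$, so one must justify that the conjugate $\bar d$ is finite (not merely lower semicontinuous) at these endpoints, that the additive constants $\lim_{s\to-\infty} m_1(s)$ and $\lim_{s\to-\infty} m_1'(s)$ are well-defined and that the latter is indeed $0$, and that $\tilde\rho = m_1''$ genuinely normalizes to $1$. All of these hinge on the moment condition $\int_{-\infty}^\infty |s|\rho(s)\,\dd s < \infty$ (in the forward direction) and on the fact that $\tilde d$ in \eqref{eq:smooth-iconv} is a rescaling of the strongly convex $d$ with finite one-sided derivatives at the endpoints (in the converse direction), so the bookkeeping is routine but must be stated. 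A secondary, purely technical point is the integration-by-parts step converting $\int_{-\infty}^\eta \int_{-\infty}^{\eta'} \tilde\rho(s)\,\dd s\,\dd\eta'$ into $\int_{-\infty}^\eta (\eta - s)\tilde\rho(s)\,\dd s$, which requires the first moment of $\tilde\rho$ to be finite; this is automatic since $\tilde\rho$ has compact-ish tails inherited from the structure of $m_1'$ saturating at $0$ and $1$.
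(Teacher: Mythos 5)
Your proposal is correct and takes essentially the same route as the paper: the first direction via the Legendre conjugate of $\bar m_1$, whose maximizer is attained at the quantile $Q_t(\rho)$ so that $\bar m_1^* = \bar d + i_{[0,1]}$ and \eqref{eq:conv:1} follows by (bi)conjugation, and the converse via $\tilde\rho = m_1''$, its normalization from the limits of $m_1'$ at $\pm\infty$, and double integration with integration by parts. The only spot where the paper is more careful than your "routine bookkeeping" is the boundary term at $-\infty$: rather than invoking finite first moment of $\tilde\rho$ via "compact-ish tails" (not literally true, e.g.\ for the entropic $d$ with $d'_+(0)=-\infty$), it proves $\lim_{s\to-\infty} s\,m_1'(s)=0$ directly from monotonicity of $m_1'$ and the existence of $\lim_{s\to-\infty} m_1(s)$.
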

\begin{proof}
    For the first part, we consider the convex conjugate of $\bar m_1$
    \[
        \bar m_1^*(t) = \sup_{\eta \in \RR} \left\{ \eta\, t - \bar m_1(\eta)\right\} \,.
    \]
    If $t \notin [0, 1]$, the supremum is $+\infty$ since $|\bar m_1(\eta) - \max\{\eta, 0\}|$ is bounded by an absolute constant. For $t \in [0, 1]$,
    the concave function $\eta \mapsto \eta t - \bar m_1(\eta)$ is maximized at
    $\eta^\star$ if and only if it satisfies the first-order optimality condition
    \[
    t= \bar m_1'(\eta^\star)=  \int_{-\infty}^{\eta^\star} \rho(s)\dd s.
    \]
    Since the latter is the cumulative distribution function,
    %of the random variable with density $\rho$,
    $\eta^\star = Q_t(\rho)$ is the corresponding quantile function (well-defined since $\rho$ is continuous). This yields
    %let us show that the convex conjugate of $\bar m_1$ is equal to
    \begin{equation}\label{eq:m1conj}
    \bar m_1^* = \bar d + i_{[0, 1]},
    \end{equation}
    which in turn gives\;\eqref{eq:conv:1}. Finally to establish the strong convexity of $\bar d$, we use again\;\eqref{eq:m1conj} together with the smoothness of $\bar m_1$. Thus $\bar m_1$ corresponds to the infimal-convolution smoothing with $\bar d$.

    For the second part, we start by noting that since $m'_1$ is Lipschitz, $m''_1$ exists almost everywhere, and % (by Rademacher's theorem).
    $\tilde \rho$ is well-defined.
    Since $m_1$ is convex, it also holds that $m_1''(s) \ge 0$, and then that we have the normalization
    \[
        \int_{-\infty}^\infty \tilde \rho(s) \dd s =
        \int_{-\infty}^\infty \tilde m_1''(s) \dd s =
        \lim_{\eta \to \infty} m_1'(\eta) - \lim_{\eta \to -\infty} m_1'(\eta) = 1 - 0 = 1 \,,
    \]
    where we use $m'(\eta)$ is the (unique) optimal solution of\;\eqref{eq:smooth-iconv}.
    Then the proof follows from the next two claims.

    \textit{Claim 1: $m_1$ admits a limit at $-\infty$.}
    Convexity of $m_1$ gives that $m_1'$ is non-decreasing. Since $\lim_{s \to -\infty} m_1'(s) = 0$, we get that $m_1'$ is non-negative. Thus, $m_1$ is non-decreasing and, since it is bounded from below, this implies that $m_1$ admits a limit at $-\infty$. We denote it as $m_1(-\infty)$.

    % By the fundamental theorem of calculus,
    % \[
    %     m_1'(\eta) = m_1'(-\infty) + \int_{-\infty}^\eta \tilde \rho(s) \dd s = \int_{-\infty}^\eta \tilde \rho(s) \dd s \, .
    % \]

    \textit{Claim 2: $\lim_{s \to -\infty} s\,m_1'(s) = 0$.}  For a given $s$, we write:
    \[
        s \; m_1'(2s) \leq  \int_{2s}^s m_1'(t) \dd t= m_1(s) - m_1(2s),
    \]
    where the inequality comes from the fact that $m_1'$ is non-decreasing. Using that $m_1$ admits a limit  at $-\infty$ (Claim 1), we get Claim 2.
    % Fix $\varepsilon > 0$. Since $m_1'$ is integrable on any neighborhood of $-\infty$, for all $\eta$ low enough, we have
    % \[
    %     \int_{2\eta}^\eta m_1'(s) \dd s \leq \frac{\varepsilon}{2}.
    % \]
    % Since $m_1'$ is non-decreasing, we obtain:
    % \[
    %     \eta \; m_1'(2\eta) \leq  \int_{2\eta}^\eta m_1'(s) \dd s \leq \frac{\varepsilon}{2}
    %     \qquad \text{\implies~~~$2\eta\; m_1'(2\eta) \leq \varepsilon$.}
    % \]
    % This is true for any $\varepsilon > 0$ and thus proves the second claim.

    Finally, we can conclude the proof with integration by parts:
    \[
    \begin{split}
        m_1(\eta) &= m_1(-\infty) + \int_{-\infty}^\eta  m_1'(s) \dd s \\
        &= m_1(-\infty)  +  [(s-\eta) m_1'(s)]_{-\infty}^\eta + \int_{-\infty}^\eta (\eta  - s) \tilde \rho(s) \dd s \\
        &= m_1(-\infty) + \int_{-\infty}^\eta (\eta - s) \tilde \rho(s) \dd s.
    \end{split}
    \]
    This establishes \eqref{eq:conv:2} and ends the proof.
\end{proof}

%  It remains now to show that $\lim_{s\mapsto -\infty} s \,m_1'(s) = 0$. But for any fixed  $\varepsilon > 0$, since $m_1'$ is integrable on any neighborhood of $-\infty$, there exists $X < 0$ such that for all $\eta \leq X$, we have
%     \[
%         \int_{2\eta}^\eta m_1'(s) \dd s \leq \frac{\varepsilon}{2}
%     \]
%     and by non-decrease of $m_1'$ on $\R$, we obtain:
%     \[
%         \eta \; m_1'(2\eta) \leq  \int_{2\eta}^\eta m_1'(s) \dd s \leq \frac{\varepsilon}{2}
%     \]
%     i.e. $2\eta\; m_1'(2\eta) \leq \varepsilon$. This is true for any $\varepsilon > 0$ and that closes the proof.

Finally, we comment the smoothing of the dual representation \eqref{eq:dual_discrete} using convolution, which is defined as
\[
    \bar S_p^\nu(u) = \frac{1}{\nu}\int_{\RR^n} S_p(u-z) \rho\big(\tfrac{z}{\nu}\big) \, \dd z = \mathbb{E}_{Z\sim \rho} [S_p(u - \nu Z)] \,,
\]
for the density $\rho\colon\RR^n\rightarrow\RR$
and the parameter $\nu > 0$.
We do not consider this smoothing approach because it suffers from two drawbacks in view of practical implementation. First, it usually cannot be computed in closed form, unlike the other smoothing approaches considered here.
Second, the %smoothness (i.e.,
Lipschitz constant of the gradient (appearing in condition numbers, constant scalings, and rates of convergence of first-order methods\;\cite{nesterov-book}) scales badly:
as $O(\sqrt{n}/\nu)$ for the Lipschitz constant of $\nabla \bar S_p^\nu$~\cite[Lemma 2]{nesterov2017random}, as opposed to the dimension-independent $O(1/\nu)$ for the one of $\nabla S_p^\nu$~\cite[Theorem 1]{nesterov2005smooth}. %the infimal convolution smoothing defined in
\section{Conclusion}\label{sec:ccl}
% ===============================================================
% ===============================================================

In this paper, we have developed two different aspects of the superquantile, a famous risk measure studied and popularized by R.\,T.\,Rockafellar and co-authors. First, we have reviewed recent applications of superquantiles in machine learning, keeping our discussion at a high-level, omitting details, and just providing basic illustrations and pointers to recent research. Second, we have provided explicit expressions of (sub)gradients of (smoothed) superquantiles; here, in contrast, we go down to the details of computation in order to get efficient first-order oracles for superquantile-based functions.
%We show that outer and inner infinimal smoothings coincide and that the computational cost
In particular, we have proved that smoothed oracles have essentially the same computational complexity as for the corresponding superquantile functions (Corollary\;\ref{cor:gradSnu} and following discussions).
% We also show that classical smoothing approaches coincide (inner/outer smoothings in Corollary\;\ref{cor:corres}, regular/infimal convolutions in Proposition\;\ref{prop:smooth-comp}).

These fast oracles are implemented in the toolbox\footnote{The code is publicly available at~\url{https://github.com/yassine-laguel/spqr}.} \texttt{spqr} build on top of the popular Python machine learning library~\texttt{scikit-learn}\;\cite{scikit-learn}. This toolbox provides an interface for using standard first-order algorithms; we refer to our numerical experiments of \cite{laguel2020first} and\;\cite{laguel2020device} (see also Appendix\;\ref{sec:a:Numerical Illustration}).
From this experimental experience, we advocate the use of quasi-Newton methods (and in particular L-BGFS; see \eg \cite{nocedal2006numerical}) that gives good performances in practice.
% The next step in this line of research is to design first-order stochastic optimization specifically adapted to the structure of (smoothed) superquantile functions in machine learning. This is the subject of current investigations.

\begin{acknowledgements}
We acknowledge support from ANR-19-P3IA-0003 (MIAI - Grenoble Alpes), NSF DMS 2023166, DMS 1839371, CCF 2019844, the CIFAR program ``Learning in Machines and Brains'', and faculty research awards.
\end{acknowledgements}

\clearpage

\bibliographystyle{abbrvnat}
\bibliography{references}

\appendix
%===================================================================
%===================================================================
\section{Proof of Theorem\texorpdfstring{\;\ref{thm:uniform-convergence-param}}{}} \label{sec:a:gen_bound}
%===================================================================
%===================================================================

In this appendix, we provide a complete proof of Theorem\;\ref{thm:uniform-convergence-param}. For classical results in this spirit, we refer to the monograph \cite{zbMATH05133436}.
For discussions on statistical aspects of statistical learning, we refer to \eg\cite{DBLP:journals/corr/abs-1810-08750,mhammedi2020pac,lee2020learning}.

%\begin{proof}

    The key step in the proof of Theorem\;\ref{thm:uniform-convergence-param} is to show the uniform convergence
    \begin{equation}\label{eq:key}
    \text{$\Rpn(w) \to \Rp(w)$ almost surely for all $w \in W$.}
    \end{equation}
    Indeed, once we have this,
    the result immediately follows as
	\begin{align}
		0 \le \Rp(w_n^\star) - \Rp(w^\star)
		&=
		\Rp(w_n^\star) - \Rpn(w_n^\star)
		+ \Rpn(w_n^\star) - \Rpn(w^\star)
		+ \Rpn(w^\star) - \Rp(w^\star)\\
		&\le
		2 \sup_{w \in W} |\Rpn(w) - \Rp(w)|
		\to 0,
	\end{align}
	%almost surely,
	where we use $\Rpn(w_n^\star) \le \Rpn(w^\star)$ in the second inequality.

	In order to prove~\eqref{eq:key}, %the uniform convergence above,
	we use the variational expression of the superquantile\;\eqref{eq:def_min_cvar}.
	We define
	\[
		\bar \Rp(w, \eta)
		= \eta + \frac{1}{1-p} \mathbb{E}_{(x, y)\sim P}
		[\max(\ell(y, \varphi(w, x))-\eta,0)] \,,
	\]
	so that, using that the loss is bounded by $B$, we can write
	\[
	\Rp(w) = \min_{\eta \in [0, B]} \bar \Rp(w, \eta).
	\]
	%to restrict the minimization over $\eta$ to $[0, B]$.
	We define the analogous empirical version $\bar \Rpn(w, \eta)$ so that
	$\Rpn(w) = \min_{\eta \in [0, B]} \bar \Rpn(w, \eta)$.
	Note that $\bar \Rpn(w, \eta)$ is measurable for each fixed $(w, \eta)$ and $\Rpn(w)$ is measurable for each fixed $w$.

	\textit{Claim 1: Under Assumption\;\ref{assump:loss}, the random variable
	\[
	\delta_n(w, \eta) := \bRpn(w, \eta) - \bar \Rp(w, \eta)
	\]
	has mean zero, lies almost surely in $[-B, B]$, and satisfies}
	\begin{align} \label{eq:lipschitz}
		|\delta_n(w, \eta) - \delta_n(w', \eta')|
		\le 2M/(1-p) \dist_\varphi(w, w') + 2(1+1/(1-p)) |\eta - \eta'| \,.
	\end{align}
	Note first that $\mathbb{E}[\bRpn(w, \eta)] = \bar \Rp(w, \eta)$
	and that the boundedness of $\delta_n$ comes from the boundedness of the loss function.
	%Let us prove now that
	The Lipschitzness of $\delta_n$ also comes from the one of the loss function, as follows.
	Using that $\max\{\cdot, 0\}$ is $1$-Lipschitz and that the loss $\ell$ is
	$M$-Lipschitz, we get
	\begin{align}
		|\max\{\ell(y, \varphi(w, x)) - \eta, 0\}
			-& \max\{\ell(y, \varphi(w', x)) - \eta', 0\}|
		\\
		&\le |\ell(y, \varphi(w, x)) - \ell(y, \varphi(w', x))|
		 	+ |\eta - \eta'| \\
        &\le M \|\varphi(w, x) - \varphi(w', x)\| + |\eta - \eta'| \\
		&\le M\dist_\varphi(w, w') + |\eta - \eta'| \,.
	\end{align}
	Then, \eqref{eq:lipschitz} simply follows from the triangle inequality, and Claim 1 is proved.

   \medskip
    The next step in the proof is, for a given  $\varepsilon > 0$
    \begin{itemize}
        \item to construct a cover $T$ of $W \times [0, B]$, and then
        \medskip
        \item to control the convergence over the points of $T$, more precisely to control the probability of the event
        \[
		E_n(\varepsilon) = \bigcap_{(w, \eta) \in T} \left\{ \delta_n(w, \eta) \le \varepsilon / 2 \right\} \,.
	    \]
    \end{itemize}  %$E_n(\varepsilon)$ denote the event

    First, using Assumption\;\ref{assump:cover}, we consider
	$T_1$ a $(\varepsilon(1-p)/(8M))$-cover of $W$ with
	respect to $\dist_\varphi$. We also consider $T_2$ a uniform discretization of the line segment $[0, B]$ at width $\varepsilon(1+1/(1-p))/8$. We can introduce the cover of $W \times [0, B]$
	\[
	T = T_1 \times T_2 \subset W \times [0, B]. %\qquad\text{which is a }
	\]
	Since, $|T_2| = 8B/((1+1/(1-p))\varepsilon)$, we have that
	$|T| = (8B/((1+1/(1-p))\varepsilon)) \covnum(\varepsilon(1-p)/(8M))$.
	 Note that the event $\left\{ \delta_n(w, \eta) \le \varepsilon / 2 \right\}$ for fixed $(w, \eta)$ since $\delta_n(w, \eta)$ is measurable, and therefore,
    $E_n(\varepsilon)$ is measurable since it is a finite intersection.

    To get uniform convergence, it is sufficient to control what happens at points of\;$T$. Indeed, for any $(w, \eta)$, there exists a point $(w', \eta') \in T$ such that $\dist_\varphi(w, w') \le \varepsilon(1-p)/(8M)$ and $|\eta - \eta'| \le \varepsilon (1+1/(1-p))/ 8$. As a consequence, if the event $E_n(\varepsilon)$ holds, then
	\begin{align}
		\delta_n(w, \eta)
		&\ \le \delta_n(w', \eta') +
			|\delta_n(w, \eta) - \delta_n(w', \eta')| \\
		&\stackrel{\eqref{eq:lipschitz}}{\le}
			\delta_n(w', \eta') +
			%2M \dist_\varphi(w, w')	+ 2 | \eta - \eta'| \\
			2M/(1-p) \dist_\varphi(w, w') + 2(1+1/(1-p)) | \eta - \eta'|\\
		&\ \le \frac{\varepsilon}{2} + \frac{\varepsilon}{4} + \frac{\varepsilon}{4}
		= \varepsilon.
	\end{align}

	This implies that events of interest are included in
	$\overline{E}_n(\varepsilon)$, the complement of $E_n(\varepsilon)$; we have indeed
	\[
	\left\{\sup_{w \in W} \, |\Rpn(w) - \Rp(w)| > \varepsilon\right\}
	\subset
	\left\{\sup_{(w,\eta) \in W \times [0, B]} \,  \delta_n(w, \eta) >\varepsilon\right\}
	\subset \overline E_n(\varepsilon)\,.
	\]
	Postponing the proof of measurability of these events to Claim 3 at the end of this proof, we have the following bound on the sum of probabilities
	%We finally get %for every $\varepsilon > 0$
	\begin{equation}\label{eq:finite}
	    \sum_{n=1}^\infty
	    \mathbb{P}\Big(\sup_{w \in W }
	    \, |\Rpn(w) - \Rp(w)| > \varepsilon\Big)
	   % &\le
	   % \sum_{n=1}^\infty \mathbb{P}\Big(\exists (w, \eta) \in W \times [0, B] \, :\, |\bar R_n(w, \eta) - \bar R(w, \eta)| > \varepsilon \Big)  \\
	    \le
	    \sum_{n=1}^\infty \mathbb{P}\big(\overline E_n(\varepsilon)\big).
	\end{equation}

    \textit{Claim 2: The probabilities of the complements of $E_n(\varepsilon)$ are summable, i.e.,}
    \[
	\sum_{n=1}^\infty \mathbb{P}\big(\overline E_n(\varepsilon)\big)  < \infty \,.
	\]
	This is a direct application of the
	Hoeffding's inequality (see e.g.~\cite[Theorem 2.2.2]{vershynin2018high})
% 	Hoeffding's inequality~\cite{hoeffding1963probability},
	as follows. For any fixed $(w, \eta) \in W \times [0, B]$, the Hoeffding's inequality gives
	\[
		\mathbb{P}(|\delta_n(w, \eta)| > \varepsilon/2) \le
		2 \exp\left(- \frac{n\varepsilon^2}{2B^2}\right) \,.
	\]
	Applied to %the union over
	all $(w, \eta) \in T$, this yields
	\[
		\mathbb{P}\big(\overline E_n(\varepsilon)\big) \le
		2|T|\exp\left(- \frac{n\varepsilon^2}{2B^2}\right)
		= \frac{16B}{((1+1/(1-p))\varepsilon} \,
		\covnum\left(\frac{\varepsilon(1-p)}{8M}\right) \exp\left(- \frac{n\varepsilon^2}{2B^2}\right) \,.
	\]
    and proves Claim 2.

\medskip

We conclude on the uniform convergence \eqref{eq:key} with the Borel-Cantelli Lemma by the classical rationale (see e.g.\;the textbook \cite[Chap.\;2, Sec.\;6]{pollard2002user}): the bound \eqref{eq:finite} and Claim 2 give that the probabilities for any $\epsilon$ are summable; applying Borel-Cantelli with the sequence $\epsilon_k=1/k$ gives the uniform convergence \eqref{eq:key}, which completes the proof of the theorem.

Finally, it remains to show measurability of some events of interest.

\textit{Claim 3: The following events are measurable for each $\varepsilon > 0$:}
\begin{align}
	E'_n(\varepsilon) &:= \left\{\sup_{w \in W} \, |\Rpn(w) - \Rp(w)| > \varepsilon\right\} \,,  \\
    E''_n(\varepsilon) &:=
	\left\{\sup_{(w,\eta) \in W \times [0, B]} \,  \delta_n(w, \eta) >\varepsilon\right\} \,.
\end{align}

We prove the claim for $E'_n(\varepsilon)$ and the second one is entirely analogous. Since
the set $\mathbb{Q}^d$ of $d$-dimensional rationals is dense in $\R^d$ and the map $w \mapsto |\Rpn(w) - \Rp(w)|$ is continuous, we have that
\[
    \sup_{w \in W} \, |\Rpn(w) - \Rp(w)| =
    \sup_{w \in W \cap \mathbb{Q}^d} \, |\Rpn(w) - \Rp(w)| \,.
\]
%The proof is completed by noting that
Since the latter term is a supremum
%of countable set $\{ |\Rpn(w) - \Rp(w)|\, : \,  w \in W \cap \mathbb{Q}^d\}$ of measurable random variables.
over a countable set of measurable random variables, we get that $E'_n(\varepsilon)$ is measurable.

%===================================================================
%===================================================================
\section{Numerical Illustrations} \label{sec:a:Numerical Illustration}
%===================================================================
%===================================================================

We provide simple illustrations of the interest of using superquantile for machine learning. More precisely, we reproduce the experimental framework of the computational experiments of \cite{DBLP:conf/nips/CuriLJ020} and we solve the superquantile optimization problems with the approach depicted here, by combining smoothing and quasi-Newton. For additional experiments with other datasets, metrics, and contexts, we refer to \cite{DBLP:conf/nips/CuriLJ020}.

We consider two basic machine learning tasks (regression and classification) with linear prediction functions $\varphi(\x,x )=\trans{\x}x$ and with two standard datasets, from the UCI ML repository. Denoting these datasets $\Pn= \{(x_i,y_i)\}_{1 \leq i \leq n}$, we introduce the (regularized) empirical risk minimization
% \[
% \min_{\x \in \Rd}
% ~~\mathbb{E}_{(x,y)\sim\Pn}\left[\ell(y, \trans{\x}x)\right] %\varphi(\x,x )\right]
% +\frac{1}{2n}\norm{\x}^2\, ,
% \]
\[
\min_{\x \in \Rd}
~~\mathbb{E}_{(x,y)\sim\Pn}\left[\ell(y, \trans{\x}x)\right] %\varphi(\x,x )\right]
+\frac{1}{2n}\|\x\|^2\, ,
\]
and its smoothed superquantile analogous
\[
\min_{\x \in \Rd}
~~{[\Spnu]}_{(x,y)\sim\Pn}\left[\ell(y, \trans{\x}x) \right]
+\frac{1}{2n}\|\x\|^2\, .
\]
%where %For both models,
%the regularisation parameter is set to $\lambda=1/n$.
%1/\text{n\_train} = 1/3341$.

We solve these problems using L-BFGS via the toolbox \texttt{SPQR}~\cite{laguel2020first} offering an simple user-interface and implementing the oracles (with the Euclidean smoothing of Example\;\ref{ex:l2} for the smoothed approximation).

\begin{figure}[h!]
\begin{center}
  \includegraphics[width=7cm]{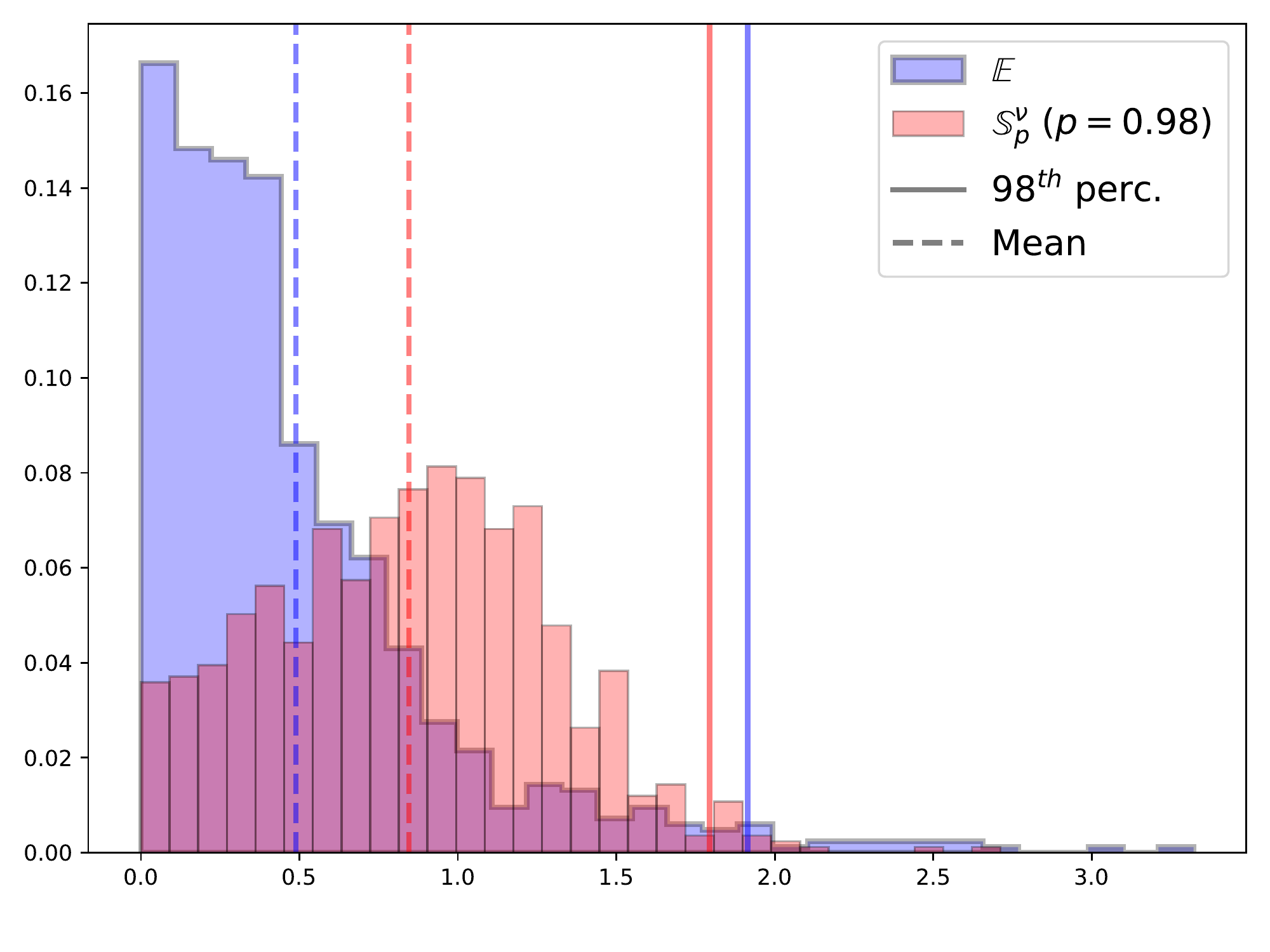}
\vspace*{-2ex}
  \caption{Regression: histogram of the regression errors on the testing dataset for the model learning by the superquantile approach (red) compared to the one of the classical empirical risk minimization (violet). We see a reshaping of the histogram of errors and a gain on worst-case errors.\label{fig:abalone}}
 \end{center}
\end{figure}

\medskip
%===================================================================
\textbf{Regression and Least-Squares}

We consider a regularised least square regression on the dataset Abalone from the UCI Machine learning repository.
We perform a $80\%/20\%$ train-test split on the dataset.
We minimize the least-squares loss on the training set both in expectation and with respect to the superquantile (with $p=0.98$ and $\nu=0.1$).

% The superquantile version a smooth approximation of the superquantile with the euclidean smoothing detailed in~\ref{ex:l2}.
% The smoothing parameter used is $0.1$ and leads to a succesful termination of L-BFGS
%(We note that two small values tend to cause an crash of the line-search procedure of BFGS).
We report on Figure~\ref{fig:abalone} the distribution of errors $|y_i - \trans{\x}x_i|$ for the testing dataset for both models $\x$ (standard in blue and superquantile in red). We observe that the superquantile model exhibits a thinner upper tail than the risk-neutral model, which is quantified by the shift to the left of $0.98$ quantile. This comes at the price of lower performance in expectations than the model trained with expectation, which is clear visible on the picture and quantified by the shift to the right of the mean.

\medskip
%===================================================================
\textbf{Classification and Logistic regression}

We consider a logistic regression on the Australian Credit dataset. We randomly split the dataset with a $80\%/20\%$ train-test split for 5 different seeds. For each seed, we perform a pessimistic distributional shift on the training dataset by downsampling the majority class (similarly to what is done in \cite[Sec.\;5.2]{DBLP:conf/nips/CuriLJ020}). More precisely, we remove an important fraction of the majority class, randomly selected, so that it counts afterward for only $10\%$ of the minority class. We tune then the safety level parameter p by a k-cross validation on the shifted dataset and select the safety parameter yielding the best validation accuracy. The grid we use for tuning this parameter is [0.8, 0.85, 0.9, 0.95, 0.99]
We finally compute with this parameter the testing accuracy and the testing precision.

We report the testing accuracy and the testing precision averaged over the 5 different seeds on the table of Figure\;\ref{tab:table} with the associated standard deviation.
We observe that the superquantile model brings better performance for both in terms of accuracy and precision than the standard model.

\begin{figure}[h!]
\begin{center}
  \begin{tabular}{ccc} \hline
  Model & Accuracy & Precision  \\ \hline
  Standard & $0.65\pm 0.03$ & $0.56\pm 0.04$ \\
  Superquantile & $0.69\pm 0.04$ & $0.60\pm 0.05$\\ \hline
  \end{tabular}
 \vspace*{-1ex}
  \caption{Classification: better testing accuracy and precision for the superquantile approach, in the case of distributional shifts.\label{tab:table}}
  \end{center}
\end{figure}

% \begin{figure}
% \begin{floatrow}
% \ffigbox{%
%   \vspace*{-1ex}
%   \centerline{\includegraphics[width=5.5cm]{abalone_regression.pdf}}
% \vspace{-1ex}
% }{%
%   \caption{Risk-averse regression on Abalone}\label{fig:abalone}
% }
% \capbtabbox{%
% \begin{center}
%   \begin{tabular}{ccc} \hline
%   Model & Accuracy & Precision  \\ \hline
%   Expectation & $0.65\pm 0.03$ & $0.56\pm 0.04$ \\
%   Superquantile & $0.69\pm 0.04$ & $0.60\pm 0.05$\\ \hline
%   \end{tabular}
%  \end{center}
% }{%
%   \caption{Distributionally robust logistic regression on the Australian credit dataset}%
% }
% \end{floatrow}
% \end{figure}

% \begin{figure}
% \begin{floatrow}
% \ffigbox{%
%   \vspace*{-1ex}
%   \centerline{\includegraphics[width=5.5cm]{abalone_regression.pdf}}
% \vspace{-1ex}
% }{%
%   \caption{Superquantile\;regression} %: superquantile vs stan}%
% }
% \capbtabbox{%
%  \vspace*{-10ex}
% \begin{center}
%  %\vspace*{-10ex}
%   \begin{tabular}{ccc} \hline
%   Model & Accuracy & Precision  \\ \hline
%   Expectation & $0.65\pm 0.03$ & $0.56\pm 0.04$ \\
%   Superquantile & $0.69\pm 0.04$ & $0.60\pm 0.05$\\ \hline
%   \end{tabular}
%  \end{center}
%   \vspace*{8ex}
% }{%
%   \caption{Superquantile\;classification}%
% }
% \end{floatrow}
% \end{figure}

\end{document}